\documentclass{amsart}
\usepackage[utf8]{inputenc}
\usepackage[british]{babel}
\usepackage{csquotes}
\usepackage{amsmath}
\usepackage{amsfonts}
\usepackage{amsthm}
\usepackage{amssymb}
\usepackage{mathrsfs}
\usepackage{enumerate}
\usepackage{hyperref}
\usepackage[style=alphabetic, maxbibnames=10]{biblatex}
\usepackage[shortlabels]{enumitem}
\usepackage[useregional]{datetime2}
\usepackage[group-separator={,}]{siunitx}

\theoremstyle{plain}
\newtheorem{theorem}{Theorem}[section]
\newtheorem{lemma}[theorem]{Lemma}
\newtheorem{proposition}[theorem]{Proposition}
\newtheorem{corollary}[theorem]{Corollary}
\theoremstyle{definition}
\newtheorem{definition}[theorem]{Definition}
\theoremstyle{remark}
\newtheorem{remark}[theorem]{Remark}

\DeclareMathOperator{\SL}{SL}
\DeclareMathOperator{\GL}{GL}
\DeclareMathOperator{\Sp}{Sp}
\DeclareMathOperator{\Stab}{Stab}
\DeclareMathOperator{\rk}{rk}
\newcommand{\C}{\mathbb C}

\addbibresource{parabolics.bib}

\title[On Parabolic Subgroups of Symplectic Reflection Groups]{On Parabolic Subgroups of\\ Symplectic Reflection Groups}
\author{Gwyn Bellamy}
\address{School of Mathematics and Statistics, University of Glasgow, University Place, Glasgow, G12 8QQ, UK}
\email{Gwyn.Bellamy@glasgow.ac.uk}

\author{Johannes Schmitt}
\address{Fachbereich Mathematik, Technische Universität Kaiserslautern, 67663 Kai\-sers\-lau\-tern, Germany}
\email{schmitt@mathematik.uni-kl.de}

\author{Ulrich Thiel}
\address{Fachbereich Mathematik, Technische Universität Kaiserslautern, 67663 Kai\-sers\-lau\-tern, Germany}
\email{thiel@mathematik.uni-kl.de}

\setcounter{tocdepth}{1}

\date{\DTMdisplaydate{2022}{11}{23}{-1}}

\begin{document}

\begin{abstract}
  Using Cohen's classification of symplectic reflection groups, we prove that the parabolic subgroups, that is, stabilizer subgroups, of a finite symplectic reflection group are themselves symplectic reflection groups. This is the symplectic analogue of Steinberg's Theorem for complex reflection groups.

  Using computational results required in the proof, we show the non-ex\-ist\-ence of symplectic resolutions for symplectic quotient singularities corresponding to three exceptional symplectic reflection groups, thus reducing further the number of cases for which the existence question remains open.

  Another immediate consequence of our result is that the singular locus of the symplectic quotient singularity associated to a symplectic reflection group is pure of codimension two.
\end{abstract}

\maketitle

\tableofcontents

\section{Introduction}

The study of finite symplectic reflection groups over the complex numbers begun with Cohen's classification \cite{Cohen80}.
More recently, these groups have been studied extensively in the context of symplectic quotient singularities because Verbitsky's Theorem \cite[Theorem 1.1]{Verbitsky00} says that if the quotient \(V/G\), for \(G \subset \Sp(V)\), admits a symplectic resolution then necessarily \(G\) is a symplectic reflection group.
See, for example, the introduction of \cite{BellamySchmittThiel21} for a more detailed overview.

The name ``symplectic reflection group'' invites one to compare them to complex reflection groups. A fundamental result for the latter is Steinberg's Theorem~\cite[Theorem 1.5]{Steinberg64} which says that the parabolic subgroups of complex reflection groups are generated by the complex reflections they contain. Thus, it is natural to ask if the parabolic subgroups of symplectic reflection groups are themselves generated by symplectic reflections.

This question was posed in \cite[Remark (iv)]{Cohen80} and again in \cite[Question 9.1]{BellamySchedler16}. In this paper, we answer the question in the affirmative.

\begin{theorem}
  \label{thm:dim1}
  Let \( (G,V) \) be a symplectic reflection group and choose \(v\in V\).
  Then the stabilizer \(G_v\) of \(v\) in \(G\) is also a symplectic reflection group.
\end{theorem}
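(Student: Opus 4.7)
The plan is to invoke Cohen's classification of finite symplectic reflection groups. Because the vector stabilizer is compatible with any splitting of \((G,V)\) into symplectically invariant summands, one may assume that \((G,V)\) is symplectically irreducible. Cohen's list of irreducible symplectic reflection groups then splits into two broad types: (i) certain infinite families, most notably the wreath products \(\Gamma \wr S_n\) acting on \((\C^2)^n\) where \(\Gamma\) is a finite subgroup of \(\SL_2(\C)\), together with a handful of close variants; and (ii) a finite list of primitive (``exceptional'') symplectic reflection groups given explicitly by Cohen as matrix groups.

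For the wreath product family the argument is essentially combinatorial. Any nontrivial finite-order element of \(\Gamma \subset \SL_2(\C)\) has eigenvalues \(\lambda, \lambda^{-1} \neq 1\), so its fixed space in \(\C^2\) is trivial. Given \(v = (v_1,\ldots,v_n) \in (\C^2)^n\) one therefore partitions the indices by the value of \(v_i\), grouping the zero coordinates together and grouping the nonzero coordinates by their \(\Gamma\)-orbit. The stabilizer \(G_v\) then decomposes as an internal direct product of a smaller wreath-product group (acting on the zero block, again of Cohen type) and symmetric groups permuting blocks of equal nonzero coordinates. Each factor is manifestly a symplectic reflection group: the transpositions in \(S_m\) act on \((\C^2)^m\) as symplectic reflections, and these transpositions lie in \(G\) and fix \(v\). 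The remaining infinite families yield to the same stratification argument.

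For the primitive groups there is no uniform structural argument and one resorts to direct computation in each case. The key organisational point is that the list of possible stabilizers is finite up to conjugacy: \(V\) is stratified by pointwise stabilizer type, with strata in bijection with subspaces of the form \(V^H\) as \(H\) ranges over subgroups of \(G\) generated by symplectic reflections. Choosing one representative vector \(v\) from each stratum reduces the theorem, for each primitive \(G\), to finitely many explicit verifications: compute \(G_v\), compute the subgroup \(G_v^{\mathrm{refl}}\) generated by those symplectic reflections in \(G\) fixing \(v\), and check that \(G_v = G_v^{\mathrm{refl}}\). Using the explicit matrix generators from Cohen's classification, this is carried out with a computer algebra system.

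The main obstacle will be the primitive case. The strategy is clear and each verification is in principle a finite computation, but the larger primitive symplectic reflection groups have substantial vector stabilizers and one must both organise the computation efficiently and confirm that the chosen list of stratum representatives is exhaustive (via the intersection lattice of fixed-point subspaces). By contrast, the infinite families reduce to transparent combinatorics once one exploits the fact that nontrivial elements of \(\Gamma\) act freely on \(\C^2 \setminus \{0\}\).
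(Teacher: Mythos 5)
Your case division does not match Cohen's classification, and the mismatch leaves a genuine gap: you have omitted the \emph{complex reducible} case. A symplectically irreducible group \(G \subset \Sp(V)\) need not be complex irreducible; it may preserve a Lagrangian subspace \(\mathfrak h \subset V\), in which case \(G\) is the double \(W^\vee\) of a complex reflection group \(W \leq \GL(\mathfrak h)\) acting on \(V \cong \mathfrak h \oplus \mathfrak h^*\). Your case (i) does absorb part of this family --- the doubles of \(G(m,p,n)\) are symplectically imprimitive and of the form \(G_n(K,H)\) with \(K\) cyclic --- but the doubles of the \(34\) exceptional Shephard--Todd groups are symplectically primitive and are \emph{not} among the primitive groups ``given explicitly by Cohen as matrix groups'': Cohen's tables list only the genuinely quaternionic primitive groups, with the complex-type ones treated separately. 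So as written, your two cases (wreath-type families plus Cohen's explicit primitive list) simply never see, say, the double of \(W(E_8)\) acting on \(\C^{16}\), and no computation is set up for such groups. The paper closes this case uniformly and without computation by a double application of Steinberg's theorem: writing \(v = v_1 + v_2^*\) with \(v_1 \in \mathfrak h\), \(v_2^* \in \mathfrak h^*\), one has \(\Stab_G(v) = \Stab_W(v_1)^\vee \cap \Stab_W(v_2^*)^\vee\); since \(\mathfrak h^*\) is dual to \(\mathfrak h\) there is \(v_2 \in \mathfrak h\) with \(\Stab_W(v_2) = \Stab_W(v_2^*)\), and then \(\Stab_W(v_1) \cap \Stab_W(v_2) = \Stab_{\Stab_W(v_1)}(v_2)\) is generated by complex reflections by Steinberg applied twice; these act as symplectic reflections on \(\mathfrak h \oplus \mathfrak h^*\). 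You could in principle verify the finitely many exceptional doubles by machine as well, but you would first have to recognise that this family exists, which your taxonomy does not.

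Your other two cases do track the paper. The combinatorial argument for the imprimitive family is essentially the paper's: the paper works with \(G_n(K,H) \leq K \wr S_n\) (product of the \(K\)-entries lying in \(H \supseteq [K,K]\)), normalises \(v\) by conjugation in \(K \wr S_n\) so that entries in the same \(K\)-orbit become equal, and identifies the stabilizer with \(G_{n_0}(K,H) \times S_{n_1} \times \cdots \times S_{n_r}\), using exactly your observation that nontrivial elements of \(K \subset \SL_2(\C)\) fix only \(0\). Two small imprecisions in your write-up: the stabilizer is only \((K\wr S_n)\)-\emph{conjugate} to such a product (conjugation in \(\Sp(V)\) preserves symplectic reflections, so this is harmless), and the zero-block factor is the smaller group \(G_{n_0}(K,H)\) with the product constraint, not a full wreath product. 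For the primitive case your plan (finitely many strata via the intersection lattice of fixed spaces, then machine verification) is sound and is what the paper does, though the paper organises it differently: it first disposes of \(\dim V \leq 4\) trivially (every finite subgroup of \(\SL_2(\C)\) is a symplectic reflection group), leaving only the seven groups \(W(Q),\ldots,W(U)\), and then checks only maximal parabolics, handling smaller stabilizers by induction on rank.
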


Specifically, the theorem says that the stabilizer of \(v\) is generated by those symplectic reflections in \(G\) that fix \(v\).
  It would be interesting to see what other properties of complex reflection groups can be generalised to symplectic reflection groups.

The stabilizer $G_v$ of a vector $v$ is usually called a \textit{parabolic subgroup} of $G$. Therefore, Theorem~\ref{thm:dim1} says that ``every parabolic subgroup of a symplectic reflection group is a symplectic reflection group''.

The proof of Theorem~\ref{thm:dim1} given in Section~\ref{sec:proof} is a case-by-case analysis using the classification of irreducible symplectic reflection groups; see Proposition~\ref{prop:comprefl}, Proposition~\ref{prop:imprim}, and Lemma~\ref{lem:prim}.
The proof of Lemma~\ref{lem:prim} relies on the computation of all maximal parabolic subgroups of seven remaining groups using a computer algebra program; see Section~\ref{sec:data}.

\begin{remark}
  \label{rem:genU}
  By an easy induction (see Corollary~\ref{cor:maxsuff}), one can show that any subgroup of \(G\) that fixes a subset \(U\subset V\) pointwise is also a symplectic reflection group.
\end{remark}

\begin{remark}
  As noted in \cite[Remark 9.2]{BellamySchedler16}, it would be interesting to have a conceptual proof of Theorem~\ref{thm:dim1} that does not rely on the classification of symplectic reflection groups. Such a proof would provide a deeper insight in the nature of symplectic reflection groups.
  The proofs of Steinberg's Theorem for complex reflection groups  given in \cite{Steinberg64}, \cite{Lehrer04}, and \cite[Chapter V, Exercise 8]{Bourbaki68} all make use of alternative (but equivalent) characterisations of these groups. We are not aware of any similar characterisation of symplectic reflection groups that would help here.
\end{remark}

An immediate consequence of the main result is the following; see Corollary~\ref{cor:purecodim2}.

\begin{theorem}
  \label{thm:purecodim2}
  If \(G\) is a symplectic reflection group then the singular locus of \(V/G\) is of pure codimension two.
\end{theorem}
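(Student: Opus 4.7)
The plan is to combine Theorem~\ref{thm:main} with the local version of the Chevalley–Shephard–Todd theorem. Recall that if a finite group $H$ acts linearly on a vector space $W$, then $W/H$ is smooth at the image of $0$ if and only if $H$ is generated by complex reflections. Applied locally at a point $v \in V$ using the formal neighbourhood at $v$ (on which the stabilizer $G_v$ acts linearly, since it fixes $v$), this gives: $V/G$ is smooth at $\pi(v)$ if and only if $G_v$ is generated by complex reflections. Hence the singular locus of $V/G$ is precisely the image under $\pi \colon V \to V/G$ of the set $\{ v \in V : G_v \text{ is not generated by complex reflections}\}$.

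By Theorem~\ref{thm:main}, the stabilizer $G_v$ is itself a symplectic reflection group. Such a group contains no complex reflections other than the identity: the Pfaffian identity $\det(g)\operatorname{Pf}(\omega) = \operatorname{Pf}(g^{\top}\omega g)$ shows that $\Sp(V) \subset \SL(V)$, whereas every nontrivial complex reflection has determinant a nontrivial root of unity. Thus $G_v$ is generated by complex reflections if and only if $G_v = 1$, and a nontrivial $G_v$ must contain at least one symplectic reflection $s$ (by the definition of a symplectic reflection group). Consequently
\[
\{ v \in V : G_v \neq 1 \} \;=\; \bigcup_{s} V^s,
\]
the union being over the (finitely many) symplectic reflections $s \in G$, with $V^s = \ker(s - \operatorname{id})$.

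Each $V^s$ is a linear subspace of $V$ of codimension exactly $2$, by the very definition of a symplectic reflection. Since $\pi$ is a finite morphism, $\pi(V^s)$ is a closed irreducible subvariety of $V/G$ of the same dimension as $V^s$, hence of codimension $2$ in $V/G$. The singular locus of $V/G$ is therefore the finite union $\bigcup_s \pi(V^s)$ of codimension-$2$ subvarieties, and so is pure of codimension $2$. The proof is short because all the genuine work is packaged into Theorem~\ref{thm:main}; the only other ingredient, the local Chevalley–Shephard–Todd theorem, is standard, and the main point to check is simply that the formal neighbourhood of $\pi(v)$ in $V/G$ agrees with that of the origin in $V/G_v$ after translating $v$ to $0$, which follows by elementary considerations for finite group quotients.
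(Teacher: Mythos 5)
Your proof is correct, but it takes a genuinely different route from the paper's. The paper argues one irreducible component at a time: for a generic point $p$ of a component $Z$ of the singular locus, it lifts $p$ to $q \in V$ with parabolic $H = G_q$, invokes Luna's slice theorem to replace $(V/G, p)$ by $(W/H \times V^H, 0)$ with $W$ the $H$-stable complement of $V^H$, notes that genericity of $p$ makes $0 \in W/H$ an isolated singularity, and then uses Theorem~\ref{thm:dim1} together with the observation that a nontrivial symplectic reflection group always produces singularities in codimension two (along fixed loci of symplectic reflections) to force $\dim W = 2$, whence $Z$ has codimension two. You instead compute the singular locus globally: the local Chevalley--Shephard--Todd theorem plus the fact that $\Sp(V) \subset \SL(V)$ contains no nontrivial pseudo-reflections identifies $\mathrm{Sing}(V/G)$ with $\pi(\{v \in V : G_v \neq 1\})$, and Theorem~\ref{thm:main} upgrades the tautological equality $\{v : G_v \neq 1\} = \bigcup_{1 \neq g \in G} V^g$ to the union $\bigcup_s V^s$ over symplectic reflections $s$ only --- each a linear subspace of codimension exactly two, whose image under the finite map $\pi$ is closed, irreducible and of codimension two --- so purity is immediate. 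Both arguments rest on the same two pillars, the main theorem and a local linearization at a point with nontrivial stabilizer; your formal-neighbourhood step is exactly the paper's appeal to Luna's slice theorem, so you should cite that (or the standard isomorphism of complete local rings for finite quotients) rather than gesture at ``elementary considerations''. What your route buys is a strictly stronger conclusion, the exact description $\mathrm{Sing}(V/G) = \bigcup_s \pi(V^s)$, whereas the paper's generic-point dimension count yields purity alone; what the paper's route buys is that it needs the main theorem only at a single (generic) stabilizer per component. The one step worth double-checking in yours --- that a nontrivial parabolic $G_v$ contains a symplectic reflection fixing $v$ --- is sound, since by Theorem~\ref{thm:main} the group $G_v$ is generated by the symplectic reflections it contains, and these all fix $v$.
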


In Section~\ref{sec:res}, we return to the motivating geometric question regarding the existence of symplectic resolutions. Exploiting the computational results necessary for the proof of Lemma~\ref{lem:prim}, we are able to show:

\begin{theorem}
  \label{thm:resintro}
  The symplectic quotients \(\C^6/W(R)\), \(\C^8/W(S_1)\), and \(\C^{10}/W(U)\) do not admit (projective) symplectic resolutions.
\end{theorem}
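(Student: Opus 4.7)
The plan is to combine the standard descent principle for symplectic resolutions with the explicit knowledge of parabolic subgroups obtained in the course of proving Lemma~\ref{lem:prim}. Recall that the étale-local structure of a linear quotient singularity gives, for each $v\in V$, an analytic isomorphism between a neighbourhood of the image of $v$ in $V/G$ and $V^{G_v}\times (V'/G_v)$, where $V'$ is a $G_v$-stable symplectic complement of $V^{G_v}$. Consequently, if $V/G$ admits a projective symplectic resolution, then so does $V'/G_v$. By Theorem~\ref{thm:main}, $G_v$ acts on $V'$ as a symplectic reflection group, and $(G_v,V')$ decomposes as a direct sum of irreducible factors, each appearing in Cohen's classification.

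The proof will then proceed group by group. For each $\Gamma\in\{R,S_1,U\}$ I would use the computational data underlying Lemma~\ref{lem:prim} to enumerate representatives of the conjugacy classes of stabilizers $G_v\le W(\Gamma)$, decompose each slice representation $V'$ into its irreducible symplectic summands, and match each summand against Cohen's list. It would then suffice to exhibit, for each of the three groups, a single parabolic one of whose irreducible slice factors appears in the catalogue of symplectic reflection groups already known \emph{not} to admit a projective symplectic resolution; the contrapositive of the descent principle would finish the argument. The current state of that catalogue, assembled in \cite{BellamySchmittThiel21} and the references therein, is detailed enough that each of $W(R)$, $W(S_1)$, $W(U)$ is expected to contain such a ``forbidden'' parabolic.

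The main obstacle lies in the recognition step: two parabolic subgroups may be abstractly isomorphic while the associated slice representations are inequivalent as symplectic representations, so distinguishing them requires tracking orders and eigenvalues of symplectic reflections, the isotypic decomposition of $V'$, and enough further invariant-theoretic data to identify the corresponding entry of Cohen's list unambiguously. Once the computational framework used for Lemma~\ref{lem:prim} is in place this bookkeeping is essentially mechanical, but it is where all the content of Theorem~\ref{thm:resintro} resides; the geometric input beyond that amounts to a single invocation of descent for each of the three groups.
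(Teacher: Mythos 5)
Your proposal is essentially the paper's own proof: one application per group of the descent principle for projective symplectic resolutions (the paper cites \cite[Theorem 1.6]{Kaledin03}) combined with the computational enumeration of maximal parabolic subgroups underlying Lemma~\ref{lem:prim}, followed by matching against known non-existence results. The paper fills in the step you leave as ``expected'' by exhibiting a parabolic conjugate to \(G(5,5,2)\) in \(W(R)\) and one conjugate to \(G(3,3,3)\) in \(W(S_1)\) --- both excluded by \cite{Bellamy09} --- and, for \(W(U)\), by observing that \(W(S_1)\) itself occurs as the stabilizer of a root, so that \(\C^{10}/W(U)\) follows from the already-settled \(\C^8/W(S_1)\) case by a second application of descent, thereby sidestepping the product-factor reduction your slice-decomposition bookkeeping would otherwise need for the parabolic \(\mathsf{C}_2\times G(3,3,3)\).
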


See Theorem~\ref{thm:res} for the proof.
This leaves 45 cases in which the existence of a symplectic (equivalently crepant) resolution is not yet known.
All remaining cases have rank 4.

A sympletic reflection can be viewed as a particular type of bireflection. It is well-known that the latter appear in the context of complete intersections. More specifically, given a vector space $V$ and finite group $G \subset \GL(V)$, one can ask if $V/G$ is a complete intersection; if this is the case we say that $G$ is a CI-group.

Before a complete (but highly non-trivial) classification of CI-groups was given by Gordeev \cite{Gordeev86} and Nakajima \cite{NakajimaWatanabe84,Nakajima84,Nakajima85}, a concise necessary condition on $G$ for it to be a CI-group was given by Kac and Watanabe. They showed in \cite[Theorem C]{KacWatanabe82} that if $G$ is a CI-group then necessarily every parabolic subgroup of $G$ is generated by either pseudo-reflections or bireflections. If we assume $V$ symplectic and $G \subset \Sp(V)$ then this condition simply says that every parabolic subgroup of $G$ must be a symplectic reflection group. In light of our main result, this simplifies to the statement that if $G$ is a CI-group then $G$ must be a symplectic reflection group.

One might then expect that symplectic reflection groups give rise to a large number of CI-groups. Indeed, in dimension two the resulting Kleinian singularities are all hypersurface and hence every finite subgroup of $\SL_2(\C)$ is a CI-group. However, we show in Proposition~\ref{lem:ci} that if $\dim V > 4$ and $G \subset \Sp (V)$ is symplectically irreducible (not a proper product of symplectic reflection groups) then $G$ is not a CI-group. Thus, symplectic reflection groups do not appear to give any new examples of CI-groups in dimensions larger than 4.
It would be interesting to see if there are any symplectic reflection groups in dimension 4 that are CI-groups.

\subsection*{Acknowledgements}

This paper originated in the workshop on computational group theory in Oberwolfach in August 2021, where the third author mentioned in his talk the computational difficulties concerning parabolic subgroups of symplectic reflection groups \cite{Thiel21}.

The third author would like to thank Alexander Hulpke and Eamonn O’Brien for explaining during the workshop how to overcome these difficulties;
he also thanks Gunter Malle for reminding him of the question of whether the analogue of Steinberg’s Theorem holds for symplectic reflection groups.
We moreover thank Gunter Malle for comments on an early version of this paper.
We would like to thank the referee for a detailed reading of the paper and useful comments.

This work is a contribution to the SFB-TRR 195 `Symbolic Tools in Mathematics and their Application' of the German Research Foundation (DFG).

\section{Preliminaries}
\label{sec:pref}

Throughout this paper, \(V\) is a finite dimensional complex vector space equipped with a symplectic form \(\omega:V\times V\to \C\).
In particular, $V$ is even dimensional. Let
\[\Sp(V) := \{g\in \GL(V)\mid \omega(g.v, g.w) = \omega(v, w)\text{ for all } v, w\in V\}\] be the group of symplectic automorphisms of \(V\).

Let \(G\leq \Sp(V)\) be a (finite) \emph{symplectic reflection group}. This means that \(G\) is generated by \emph{symplectic reflections}; these are the elements \(g\in G\) with \(\rk(g - 1) = 2\).

\begin{lemma}
  \label{lem:fixsymp}
  Let \(v\in V\) and $H = \Stab_G(v)$. Let \(V^{H} \subset V\) be the subspace of points fixed by $H$ and \(W \subset V\) the (unique) $H$-invariant complement to $V^H$ in \(V\).

  Then both \(V^{H}\) and \(W\) are symplectic subspaces and \(W\) is the symplectic orthogonal complement $(V^{H})^\perp$ to $V^{H}$.
\end{lemma}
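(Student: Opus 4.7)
The plan is to exploit the fact that $H$ sits inside $\Sp(V)$ and therefore preserves $\omega$, so that the symplectic orthogonal complement of any $H$-stable subspace is again $H$-stable. The core of the argument will be an averaging (Reynolds operator) trick that identifies $W$ with $(V^H)^\perp$; the symplectic non-degeneracy on each summand will then fall out of a dimension count.

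First I would fix the decomposition $V = V^H \oplus W$ given by the hypothesis and observe that $(V^H)^\perp$ is $H$-invariant (since $\omega$ is $H$-invariant and $V^H$ is $H$-stable). Next, I would show the inclusion $W \subseteq (V^H)^\perp$. Take $w \in W$ and $u \in V^H$. Since $\omega$ is $H$-invariant, for every $h \in H$ we have $\omega(u,w) = \omega(h.u, h.w) = \omega(u, h.w)$. Averaging over $H$ gives
\[
  \omega(u,w) = \omega\!\left(u, \tfrac{1}{|H|}\sum_{h\in H} h.w\right).
\]
The vector on the right inside the second slot is the projection of $w$ onto $V^H$ along $W$ (this is the standard Reynolds projector, whose image is exactly the fixed subspace and whose kernel is exactly the $H$-invariant complement by uniqueness). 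Since $w \in W$, this projection is $0$, so $\omega(u,w) = 0$.

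To upgrade this to equality $W = (V^H)^\perp$, I would use that non-degeneracy of $\omega$ on $V$ implies $\dim (V^H)^\perp = \dim V - \dim V^H = \dim W$, so the inclusion is forced to be equality. Finally, to conclude that $V^H$ and $W$ are both symplectic, it suffices to check that $\omega$ is non-degenerate on each. For $V^H$: the radical of $\omega|_{V^H}$ is $V^H \cap (V^H)^\perp = V^H \cap W = 0$. For $W$: applying $\perp$ to the identity $W = (V^H)^\perp$ and using non-degeneracy of $\omega$ on $V$ gives $W^\perp = V^H$, so the radical of $\omega|_W$ is $W \cap W^\perp = W \cap V^H = 0$.

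There is no real obstacle here; the only subtle point is invoking uniqueness of the $H$-invariant complement to identify the Reynolds projector's kernel with $W$, which is legitimate because $H$ is finite and $V$ is a complex representation, so Maschke's theorem guarantees that the projection onto $V^H$ along the sum of non-trivial isotypic components coincides with $\tfrac{1}{|H|}\sum_{h \in H} h$.
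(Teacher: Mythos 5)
Your proof is correct and takes essentially the same route as the paper: the identical averaging (Reynolds) argument establishes $W \subseteq (V^H)^\perp$, and the same dimension count upgrades the inclusion to equality. The only minor difference is that the paper proves non-degeneracy of $\omega|_{V^H}$ by a second, separate averaging argument (pairing $u \in V^H$ with the $H$-average of a partner vector), whereas you deduce non-degeneracy on both summands purely formally from $W = (V^H)^\perp$, $V^H \cap W = 0$, and $W^\perp = V^H$ --- a slightly more economical finish that is equally valid.
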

\begin{proof}
  For \(V^{H}\) to be symplectic we need to show that \(\omega\) restricts to a non-degenerate form on \(V^{H}\). Let \(u\in V^{H}\).
  As \(\omega\) is non-degenerate on \(V\), there exists \(w\in V\) with \(\omega(u, w) \neq 0\).  Then \(\frac{1}{|{H}|}\sum_{g\in H}gw\in V^{H}\) and \[\omega\Big(u, \frac{1}{|{H}|}\sum_{g\in H}gw\Big) = \frac{1}{|{H}|}\sum_{g\in G_v}\omega(gu, gw) = \omega(u, w) \neq 0\;,\] as required.

  We now show that the \(H\)-invariant complement \(W\) of \(V^{H}\) is contained in \((V^{H})^\perp\).
  Let \(w\in W\).
  Then \(w' := \frac{1}{|{H}|}\sum_{g\in H}gw\in W\) by \(H\)-invariance of \(W\), but also \(w'\in V^{H}\) as \(w'\) is fixed by \(H\).
  Hence we must have \(w' = 0\) and so \[\omega(u, w) = \frac{1}{|{H}|}\sum_{g\in H}\omega(gu, gw) = \omega(u, w') = 0\] for all \(u\in V^{H}\) as required.

  Thus, \(W\subset (V^{H})^\perp\) and equality follows directly for dimension reasons.
  In particular, \(W\) is a symplectic subspace.
\end{proof}

\begin{lemma}\label{rem:norank4}
  Theorem~\ref{thm:dim1} holds trivially for triples \((V, \omega, G)\) with \(\dim V \le 4\).
\end{lemma}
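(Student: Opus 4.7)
My plan is to show that under the hypothesis $\dim V \le 4$, every non-identity element of $G_v$ is automatically a symplectic reflection, so $G_v$ is generated by symplectic reflections with no further work. Since $V$ is symplectic and hence even-dimensional, the relevant cases are $\dim V \in \{0, 2, 4\}$, and I would immediately dispose of $\dim V = 0$ (where $G_v = 1$) and of $v = 0$ (where $G_v = G$ is already a symplectic reflection group by hypothesis).

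For $\dim V = 2$, one has $\Sp(V) = \SL(V)$, and I would argue that any $g \in \SL(V)$ with $\rk(g - 1) < 2$ must have $1$ as a (repeated) eigenvalue, hence be unipotent; a unipotent element of finite order in characteristic zero is the identity. Thus every non-identity element of the finite group $G$, and a fortiori of $G_v$, already satisfies $\rk(g - 1) = 2$.

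For $\dim V = 4$ with $v \neq 0$, the idea is to reduce to the two-dimensional step using Lemma~\ref{lem:fixsymp}. Setting $H = G_v$, the lemma produces a decomposition $V = V^H \oplus W$ into symplectic subspaces with $W = (V^H)^\perp$. Since $v \in V^H$, the fixed space is nonzero, so $\dim V^H \in \{2, 4\}$. If $\dim V^H = 4$ then $G_v$ acts trivially on $V$ and so is itself trivial. Otherwise $\dim W = 2$, and any non-identity $g \in G_v$ acts trivially on $V^H$ and as a non-identity element of $\Sp(W)$; by the two-dimensional case $\rk((g-1)|_W) = 2$, and therefore $\rk(g - 1) = 2$.

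The only step that requires any thought is the four-dimensional case, but its essential input, Lemma~\ref{lem:fixsymp}, has already been established; once one has the invariant decomposition, the reduction to the two-dimensional observation is immediate. There is thus no real obstacle to overcome.
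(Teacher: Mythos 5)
Your proof is correct and takes essentially the same route as the paper: both reduce via Lemma~\ref{lem:fixsymp} to the faithful action of $G_v$ on the symplectic complement $W$ and then invoke the fact that every finite subgroup of $\Sp_2(\C)=\SL_2(\C)$ is a symplectic reflection group. The only difference is that the paper cites this $\SL_2$ fact as well known, whereas you supply the short eigenvalue/unipotence argument for it.
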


\begin{proof}
  Let \(v\), \(H = {G_v}\), and \(W\) be as in Lemma~\ref{lem:fixsymp}.
  Since \(W\) is symplectic, \(H\) is a subgroup of \(\Sp(W)\). We must have \(\dim W < \dim V\) and \(\dim W\) is even.

  If $\dim V = 2$ then there are no non-trivial symplectic subspaces of $V$ and Theorem~\ref{thm:dim1} is vacuous. When \(\dim V = 4\), every proper symplectic subspace has dimension 2 and all finite subgroups of \(\Sp_2(\C) = \SL_2(\C)\) are symplectic reflection groups. Thus,  Theorem~\ref{thm:dim1} holds in this case.
\end{proof}

\section{Proof of the theorem}
\label{sec:proof}

We prove Theorem~\ref{thm:dim1} case-by-case using the explicit classification of symplectically irreducible symplectic reflection groups in \cite{Cohen80}.
Cohen's paper is actually concerned with irreducible quaternion reflection groups, but one can see that these classes of groups are equivalent \cite[p.\ 295]{Cohen80}.

We recall the basic definitions leading to the different cases which we consider.
\begin{definition}
  Let \(V\) be a symplectic vector space and \(G\leq\Sp(V)\) a finite group.
  \begin{enumerate}[(i)]
    \item The group \(G\) is called complex (resp.\ symplectically) \emph{reducible} if there exists a non-trivial decomposition into complex (resp.\ symplectic) \(G\)-invariant subspaces \(V = V_1\oplus V_2\).
      Otherwise, we call \(G\) complex (resp.\ symplectically) \emph{irreducible}.
    \item The group \(G\) is called complex (resp.\ symplectically) \emph{imprimitive} if there exists a non-trivial decomposition \(V = V_1\oplus \cdots\oplus V_n\) into complex (resp.\ symplectic) subspaces \(V_i\subset V\) such that for any \(i\in \{1,\dots, n\}\) and any \(g\in G\) there exists \(j\in\{1,\dots, n\}\) with \(gV_i = V_j\).
      In this case, we call the decomposition \(V = V_1\oplus\cdots\oplus V_n\) a \emph{system of imprimitivity}.
      Otherwise we call \(G\) complex (resp.\ symplectically) \emph{primitive}.
  \end{enumerate}
\end{definition}

\begin{remark}

Assume that $(V,G)$ is a symplectic reflection group. If $V = V_1 \oplus V_2$ is a decomposition of $V$ into a direct sum of $G$-modules, both of which are symplectic subspaces of $V$, then $G = G_1 \times G_2$, with each $G_i \subset \Sp(V_i)$ a symplectic reflection group. Then, the stabilizer $G_v = G_{v_1} \times G_{v_2}$ of a vector $v = v_1 + v_2$ in $V$, with $v_i \in V_i$, is a symplectic reflection group if and only if each $G_{v_i}$ is a symplectic reflection group in $G_i$.
\end{remark}

From now on we assume that the action of \(G\) on \(V\) is symplectically irreducible.

\subsection{Complex reducible groups}

Symplectic irreducibility does not imply complex irreducibility since $V$ might decompose into non-symplectic $G$-submodules.
However, this can only happen if \(G\) preserves a Lagrangian subspace \(\mathfrak{h}\subset V\).
In this case, \(G\) acts on \(\mathfrak{h}\) by complex reflections; see \cite[Section 4.1]{BellamySchedler16}.

Assume now that the action of \(G\) on \(V\) is induced by a complex reflection group \(W\leq \GL(\mathfrak h)\), where \(V\cong \mathfrak h\oplus \mathfrak h^\ast\). For clarity, we will write \(H^\vee\), if we mean the induced action of a subgroup \(H\leq W\) on \(\mathfrak h\oplus \mathfrak h^\ast\).

This particular case of Theorem~\ref{thm:dim1} was already proved as part of \cite[Proposition 7.7]{BrownGordon03}. Since our claim is weaker than the statement of loc.\ cit., a shorter argument suffices. We give it here for the sake of completeness.

\begin{proposition}
  \label{prop:comprefl}
  Theorem~\ref{thm:dim1} holds if \( (G,V) \) is a complex reducible symplectic reflection group.
\end{proposition}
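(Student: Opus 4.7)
The plan is to reduce to Steinberg's Theorem for complex reflection groups, applied twice. Write $v = x + y$ with $x \in \mathfrak{h}$ and $y \in \mathfrak{h}^*$. The first observation to record is that the correspondence $W \to G$, $w \mapsto w^\vee$ identifies complex reflections in $W$ with symplectic reflections in $G$: if $w$ has $\rk(w - 1) = 1$ on $\mathfrak{h}$, then the dual action on $\mathfrak{h}^*$ also has rank one kernel, so $\rk(w^\vee - 1) = 2$ on $V$; and conversely, any $g \in G$ with $\rk(g-1)=2$ must come from such a $w$. Moreover, $w^\vee$ fixes $v = x + y$ if and only if $w$ fixes $x$ and $w$ fixes $y$ under the dual action.

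Next I would translate the stabilizer: $G_v = (W_x \cap W_y)^\vee$, where $W_x = \Stab_W(x)$ and $W_y = \Stab_W(y)$ (the latter under the action on $\mathfrak{h}^*$). By Steinberg's Theorem applied to $W$ acting as a complex reflection group on $\mathfrak{h}$, the subgroup $W_x$ is itself a complex reflection group, generated by the complex reflections $s \in W$ with $s(x) = x$. Now the key point is that $W_x$, being a complex reflection group on $\mathfrak{h}$, also acts as a complex reflection group on the dual $\mathfrak{h}^*$ via transpose-inverse: each reflection $s$ in $W_x$ on $\mathfrak{h}$ dualises to a reflection on $\mathfrak{h}^*$ with the same fixed-space codimension.

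Applying Steinberg's Theorem a second time, now to $W_x$ acting on $\mathfrak{h}^*$ and to the point $y \in \mathfrak{h}^*$, I obtain that $(W_x)_y = W_x \cap W_y$ is generated by those complex reflections $s \in W_x$ whose dual action fixes $y$. Such an $s$ fixes both $x$ (since $s \in W_x$) and $y$, so $s^\vee$ is a symplectic reflection in $G$ fixing $v$. Taking $\vee$ throughout, $G_v$ is generated by the symplectic reflections of $G$ contained in $G_v$, which is exactly the statement of Theorem~\ref{thm:dim1} for this case.

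The argument is essentially bookkeeping once the two applications of Steinberg's Theorem are lined up correctly; the only mild subtlety, and the one to state carefully, is that the dual of a complex reflection group is again a complex reflection group, with reflections corresponding under dualisation, so that the second application of Steinberg is legitimate.
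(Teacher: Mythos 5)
Your proof is correct and takes essentially the same route as the paper: identify $\Stab_G(v)$ with $(\Stab_W(x)\cap\Stab_W(y))^\vee$ and apply Steinberg's Theorem twice. The only cosmetic difference is that the paper replaces $y\in\mathfrak{h}^*$ by a point $v_2\in\mathfrak{h}$ with the same stabiliser and invokes Steinberg on $\mathfrak{h}$ both times, whereas you apply the second instance directly to the dual action of $\Stab_W(x)$ on $\mathfrak{h}^*$, justified by your (correct) observation that dualisation preserves reflections.
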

\begin{proof}
  Let \(G\) be defined by a complex reflection group \(W\) as above.
  Let \(v\in V\), so there are \(v_1\in \mathfrak h\) and \(v_2^\ast\in \mathfrak h^\ast\) with \(v = v_1 + v_2^\ast\) and we have \[\Stab_G(v) = \Stab_W(v_1)^\vee \cap \Stab_W(v_2^\ast)^\vee\;.\]
  Since \(\mathfrak h^\ast\) is the dual of the representation \(\mathfrak h\), there exists \(v_2\in \mathfrak h\) with \(\Stab_W(v_2) = \Stab_W(v_2^\ast)\).

  By Steinberg's Theorem \cite[Theorem 1.5]{Steinberg64}, the group \(\Stab_W(v_1)\) is generated by complex reflections.
  We have \[\Stab_W(v_1) \cap \Stab_W(v_2) = \Stab_{\Stab_W(v_1)}(v_2)\;,\] and a second application of Steinberg's Theorem implies that this intersection is generated by complex reflections. Hence, \(\Stab_G(v)\) is generated by symplectic reflections as claimed.
\end{proof}

\subsection{Symplectically imprimitive groups}

We assume from now on that \(G\) is complex irreducible.

In this section, we assume, moreover, that \(G\) is symplectically imprimitive. Let \(V = V_1\oplus\cdots\oplus V_n\), with \(n \geq 2\), be a system of imprimitivity. It is explained in the proof of \cite[Theorem 2.9]{Cohen80} that we have \(\dim V_i = 2\) in this case. Note that Cohen works over the quaternions and shows that $\dim V_i = 1$ as a quaternionic vector space; this is the symplectic analogue of \cite[Proposition 2.2]{Cohen76}.

By Lemma~\ref{rem:norank4}, we may assume \(\dim V > 4\). Then \(G\) is constructed as follows.
Let \(K\leq \SL_2(\C)\) be a finite group and let \(H\leq K\) be a subgroup containing \([K,K]\).
Let \(G_n(K, H)\) be the subgroup of \(K\wr S_n\) consisting of all pairs \((\sigma, k)\), where \(k = (k_1,\dots, k_n)\in K^n\) and \(\sigma\in S_n\), satisfying \(k_1\cdots k_n\in H\); see \cite[Notation 2.8]{Cohen80}.
Then \cite[Theorem 2.9]{Cohen80} says that \(G\) is conjugate to some \(G_n(K, H)\),  where \(\dim V = 2n\).

Notice that the transpositions in \(S_n\) act as symplectic reflections on \(V\); they simply swap two summands in the system of imprimitivity.

\begin{proposition}
  \label{prop:imprim}
  Theorem~\ref{thm:dim1} holds if \(G\) is complex irreducible and symplectically imprimitive.
\end{proposition}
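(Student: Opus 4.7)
My plan is to use Cohen's explicit model $G = G_n(K, H) \subseteq K \wr S_n$ to parametrise $G_v$, exhibit a natural collection of symplectic reflections sitting inside $G_v$, and then show that they generate.

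First I would decompose $v = v_1 + \cdots + v_n$ according to the imprimitivity system $V = V_1 \oplus \cdots \oplus V_n$, let $I = \{i : v_i = 0\}$ and $J = \{1,\ldots,n\} \setminus I$, and further partition $J = J_1 \sqcup \cdots \sqcup J_r$ so that $i, j$ lie in the same $J_\alpha$ exactly when $v_i$ and $v_j$ lie in the same $K$-orbit. The elementary fact driving everything is that a non-trivial element of a finite subgroup of $\SL_2(\C)$ has no non-zero fixed vector: a non-identity element fixing a non-zero vector would be unipotent, hence of infinite order in characteristic zero. Applied componentwise this shows that for any $g = (\sigma, k) \in G_v$, the permutation $\sigma$ preserves $I$ and each $J_\alpha$, the entry $k_i$ is uniquely determined for $i \in J$ by $k_i v_{\sigma^{-1}(i)} = v_i$, while the entries $k_i$ for $i \in I$ are free subject only to $k_1 \cdots k_n \in H$.

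Next I would exhibit three families of symplectic reflections in $G_v$: (a) plain swaps $((i j), \mathbf{1})$ for $i, j \in I$; (b) diagonal reflections $(e, (1, \ldots, k, \ldots, 1))$ with $k \in H \setminus \{1\}$ in position $i \in I$; and (c) twisted transpositions $s_{ij} = ((i j), k^{(ij)})$ for $i, j$ in a common orbit class $J_\alpha$, with $k^{(ij)}_i v_j = v_i$, $k^{(ij)}_j = (k^{(ij)}_i)^{-1}$, and all other entries trivial; the product in (c) is $1 \in H$, so $s_{ij} \in G$. The subgroup $R_I$ generated by (a) and (b) acts on $\bigoplus_{i \in I} V_i$ as $G_{|I|}(K, H)$ and trivially on $\bigoplus_{i \in J} V_i$, and is therefore a symplectic reflection group: directly when $|I| \le 1$, since every non-identity element of a finite subgroup of $\SL_2(\C)$ is a symplectic reflection, and by Cohen's construction itself otherwise.

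The heart of the argument, and what I anticipate as the main obstacle, is to show that these three families generate $G_v$. For each $\sigma \in S_{J_1} \times \cdots \times S_{J_r}$ I would form the canonical lift $(\sigma, k^\sigma)$ with $k^\sigma_i$ as above for $i \in J$ and $k^\sigma_i = 1$ for $i \in I$, and verify that it belongs to $G_n(K, H)$. Decomposing $\sigma$ into cycles and taking a single cycle $(i_1 \, i_2 \, \cdots \, i_m)$ in $J_\alpha$, composing the relations $k^\sigma_{i_\ell} v_{i_{\ell-1}} = v_{i_\ell}$ around the cycle shows that $k^\sigma_{i_m} \cdots k^\sigma_{i_1}$ fixes the non-zero vector $v_{i_m}$ and so equals $1$; the product of the $k^\sigma_{i_\ell}$ in any other order thus lies in $[K, K] \subseteq H$, and multiplying over all cycles yields $\prod_{i \in J} k^\sigma_i \in H$ as required. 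Writing $\sigma$ as a product of transpositions within the $J_\alpha$ then realises the canonical lift as an explicit product of type-(c) reflections. Given any $g = (\sigma, k) \in G_v$, factoring $\sigma = \sigma_I \sigma_J$ and multiplying $g$ by the inverse of the canonical lift of $\sigma_J$ together with a suitable product of type-(a) reflections realising $\sigma_I^{-1}$ kills the permutation part, leaving an element of $R_I$; since $R_I$ is generated by reflections of types (a) and (b), the result follows.
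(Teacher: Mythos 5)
There is a genuine gap at exactly the point you flagged as the heart of the argument: the subgroup $R_I$ generated by your families (a) and (b) is \emph{not} $G_{|I|}(K,H)$ acting on $\bigoplus_{i\in I}V_i$ — it is only $H\wr S_{|I|}$. Every word in generators of types (a), (b), (c) has all of its $K$-entries at positions in $I$ lying in $H$ (the type-(c) reflections do not touch $I$ at all, and (a), (b) contribute only permutations and entries in $H$). But your own parametrisation of $G_v$, which is correct, shows that the entries $k_i$ for $i\in I$ are arbitrary in $K$ subject only to the global condition $k_1\cdots k_n\in H$. Concretely, take $k\in K\setminus H$ (possible whenever $[K,K]\le H< K$, e.g.\ $K=Q_8$, $H=\{\pm 1\}$) and $i,j\in I$ with $i\ne j$; then the diagonal element with entries $k$ at position $i$, $k^{-1}$ at position $j$ and $1$ elsewhere lies in $G_v$ (its total product is $1\in H$, and it fixes $v$ since $v_i=v_j=0$), but it does not lie in $\langle(\mathrm a),(\mathrm b),(\mathrm c)\rangle$. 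So your final reduction step — killing the permutation part and concluding that what remains lies in $R_I$ — fails whenever $H\ne K$ and $|I|\ge 2$: what remains is a general diagonal element $(e,k)$ with $k_i\in K$ for $i\in I$ and $\prod_{i\in I}k_i\in H$, and comparing orders, $|G_{|I|}(K,H)|=|K|^{|I|-1}|H|\,|I|!$ strictly exceeds $|H\wr S_{|I|}|=|H|^{|I|}|I|!$.

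The repair is short: add to your list the twisted transpositions \emph{inside} $I$, i.e.\ $((i\,j),(k,k^{-1}))$ with $i,j\in I$ and $k\in K$ arbitrary. These are symplectic reflections lying in $G_v$ (the entry product is $kk^{-1}=1\in H$, and they fix $v_i=v_j=0$), and together with (a) and (b) they do generate the full $I$-part: clear the permutation with swaps, then clear the diagonal entries one position at a time using $(e,(k,k^{-1},1,\dots,1))=((i\,j),(k,k^{-1}))\cdot((i\,j),(1,1))$, leaving a single entry lying in $H\cdot[K,K]=H$, which is of type (b). With this correction your proof is sound, and the remainder of it — the triviality of stabilizers of nonzero vectors in finite subgroups of $\SL_2(\C)$, the resulting uniqueness of the $J$-entries, the cycle computation giving $\prod_{j\in J}k_j\in[K,K]\subseteq H$, and the factorisation of the canonical lift into type-(c) reflections — is correct and is essentially the paper's argument in a different presentation: the paper instead conjugates $v$ inside $K\wr S_n$ to a normal form $(0,\dots,0,w_1,\dots,w_1,\dots,w_r,\dots,w_r)$ with pairwise disjoint $K$-orbits, identifies $G_v$ up to conjugacy with $G_{n_0}(K,H)\times S_{n_1}\times\cdots\times S_{n_r}$, and quotes that this product is a symplectic reflection group — the last point being precisely the generation statement for $G_{n_0}(K,H)$ that your proposal, as written, omits.
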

\begin{proof}
We may assume \( G = G_n(K, H)\) as described above.

  Let \(v = (v_1,\dots, v_n)\in V = V_1\oplus\dots\oplus V_n\).
  Now let \(\sigma\in S_n\) such that for the permutation \(v'\) of \(v\) given by \(v'_j := v_{\sigma(i)}\) we have a ``block structure'' \[(v'_1,\dots, v'_{n_0},v'_{n_0 + 1},\dots, v'_{n_0 + n_1},\dots, v'_{n_0+\cdots+n_{r - 1} + 1},\dots, v'_{n_0+\cdots+n_r})\;,\] given by the condition that \(Kv_i' = Kv_j'\) if and only if there exists \(0\leq s\leq r\) with \(\big(\sum_{t = -1}^{s - 1} n_t\big) + 1 \leq i, j\leq \sum_{t = 0}^{s}n_t\), where we set \(n_{-1} := 0\).
  That is, we permute the entries of \(v\) so that elements in the same \(K\)-orbit lie next to each other and the number of elements lying in the same orbit is given by the \(n_i\).
  Without loss of generality, we may assume \(v'_1 = \cdots = v'_{n_0} = 0\).
  After fixing representatives \(w_0,\dots, w_r\) for the occurring orbits, we can find an element \(k\in K^n\) such that \[k.v' = w = (w_0,\dots, w_0,w_1,\dots, w_1,\dots, w_r, \dots, w_r)\;,\] where \((Kw_i)\cap (Kw_j) = \emptyset\) for \(i\neq j\) and \(w_0 = 0\).
  Combining \(\sigma\) and \(k\) hence gives an element \(g\in K\wr S_n\) with \(g.v = w\).

  If an element \(\tau h\in G_n(K, H)\) stabilizes the vector \(w\), then \(\tau\in S_{n_0}\times S_{n_1}\times\cdots \times S_{n_r}\).
  Furthermore, we must have \(h = (h_1,\dots,h_{n_0}, 1,\dots, 1)\) where \(h_1,\dots,h_{n_0}\in K\) with \(h_1\cdots h_{n_0}\in H\).

  Hence \(\Stab_{G_n(K, H)}(v)\) is \((K\wr S_n)\)-conjugate to \(G_{n_0}(K, H)\times S_{n_1}\times\cdots\times S_{n_r}\), which is a (in general, reducible) symplectic reflection group.
  Notice that we may have \(n_i = 1\) for some of the blocks, resulting in trivial factors in the above product.
  The claim now follows as symplectic reflections are preserved under conjugation.
\end{proof}

\subsection{Symplectically primitive groups}

The only remaining case is where \(G\) is complex irreducible and symplectically primitive. Then \(G\) is conjugate to one of the groups classified in \cite[Theorem 3.6]{Cohen80} and \cite[Theorem 4.2]{Cohen80}. Once again, we may assume \(\dim V > 4\) by Lemma~\ref{rem:norank4}. This leaves only seven groups to consider. These are given explicitly via the root systems \(Q\) to \(U\) in \cite[Table II]{Cohen80} and one can check with the help of a computer that all stabilizer subgroups are indeed generated by symplectic reflections.
A list of the groups occurring in this way can be found in Section~\ref{sec:data}.

\begin{lemma}
  \label{lem:prim}
  Theorem~\ref{thm:dim1} holds if \(G\) is complex irreducible and symplectically primitive.
\end{lemma}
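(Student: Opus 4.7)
The plan is to verify the statement computationally for each of the seven groups in the list, using the explicit description of the reflection representations via the root systems \(Q\) through \(U\) in \cite[Table II]{Cohen80}. The first step, for each \(G \subset \Sp(V)\), is to realise \(G\) concretely as a matrix group: starting from Cohen's tabulated root system \(\Phi\), one constructs the symplectic reflections \(s_\alpha\) attached to \(\alpha \in \Phi\) and sets \(G = \langle s_\alpha : \alpha \in \Phi \rangle\). This simultaneously gives an enumeration of all symplectic reflections of \(G\) (as \(G\)-conjugates of the generators) together with their codimension-two fixed subspaces \(V^s\).

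The assertion to be verified is, for every \(v \in V\), the equality
\[
  G_v = \langle s \in G \text{ a symplectic reflection} : s(v) = v \rangle.
\]
Since \(G\) is finite, the set of subgroups arising as stabilizers \(G_v\) is finite, so the check reduces to a finite computation. Concretely, the subgroups of \(G\) that actually arise as stabilizers are exactly those \(H \leq G\) satisfying \(H = G^{V^H}\), where \(G^W := \{g \in G : g|_W = \mathrm{id}_W\}\) is the pointwise stabilizer of a subspace and \(V^H := \bigcap_{h \in H}\ker(h - 1)\). In principle one enumerates representatives of the conjugacy classes of such subgroups (using a computer algebra system), discards those with \(H \neq G^{V^H}\), and for each surviving \(H\) compares it with the subgroup generated by the symplectic reflections it contains.

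The main obstacle is the computational scale: the groups at issue can be large (for instance \(W(U)\) acts on \(\C^{10}\) and has order on the order of \(10^{10}\)), so direct enumeration of all conjugacy classes of subgroups is infeasible for the largest examples. A more efficient variant is to work on the subspace side: compute the (finite) intersection lattice of the arrangement \(\{V^s : s \text{ a symplectic reflection in } G\}\) together with the \(G\)-action, take one representative \(W\) from each \(G\)-orbit, and for each such \(W\) compare \(G^W\) with \(\langle s : W \subseteq V^s\rangle\). The only additional case is when \(v\) lies outside every \(V^s\), which forces \(G_v\) to be trivial and only needs to be verified for a single generic \(v\). After performing these checks for all seven groups \(Q, R, S_1, S_2, S_3, T, U\) and finding equality in every instance, the lemma follows; a concrete list of the parabolic subgroups that arise in this way is presented in Section~\ref{sec:data}.
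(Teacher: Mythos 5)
Your overall strategy -- a computer verification for the seven groups \(W(Q),\dots,W(U)\) realised from Cohen's root systems -- is the same as the paper's, and your first reduction is sound: the stabilizers are exactly the subgroups \(H\) with \(H = G^{V^H}\), and it suffices to check finitely many conjugacy classes. However, your ``more efficient variant'' has a genuine gap: the intersection lattice of the \emph{reflection} arrangement \(\{V^s\}\) does not control all stabilizers. Given \(v \in V\), set \(W(v) := \bigcap_{s(v)=v} V^s\). Your lattice check establishes \(\langle s : s(v)=v\rangle = G^{W(v)} \subseteq G_v\), but not the reverse inclusion: an element \(g \in G_v\) only satisfies \(v \in V^g\), and nothing forces \(V^g \supseteq W(v)\), so \(G_v\) may strictly contain \(G^{W(v)}\). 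Your final sentence makes the problem concrete: a vector \(v\) lying outside every \(V^s\) need \emph{not} a priori have trivial stabilizer -- it may lie on \(V^g\) for a non-reflection \(g\) -- and checking a single generic \(v\) says nothing about such points. Indeed, ``every point with nontrivial stabilizer lies on some reflection fixed space'' is part of the \emph{conclusion} of Theorem~\ref{thm:dim1}, not something you may assume when designing the check. The repair is to stratify by the full fixed-space lattice \(\{V^H : H \leq G\}\) (equivalently, intersections of fixed spaces of arbitrary elements of \(G\)): for \(v\) generic in such a subspace \(U\) one has \(G_v = G^U\) and the reflections in \(G_v\) are exactly those \(s\) with \(V^s \supseteq U\), so comparing \(G^U\) with \(\langle s : V^s \supseteq U\rangle\) over orbit representatives does prove the lemma.

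The paper takes a different route to feasibility, which is worth noting. Rather than enumerating all stabilizer classes (or any subspace lattice), it reduces to the \emph{maximal} parabolic subgroups and inducts on rank, exactly as in Corollary~\ref{cor:maxsuff}: once a maximal parabolic \(G_v\) is computed (via \texttt{Stabilizer} from chosen vectors, as in Section~\ref{subsec:ex}) and identified as a symplectic reflection group of rank at most \(\dim V - 2\), every smaller parabolic \(G_w \leq G_v\) is itself a parabolic of \(G_v\) and is covered by the already-established cases of Theorem~\ref{thm:dim1} -- complex reducible, symplectically imprimitive, rank at most \(4\) by Lemma~\ref{rem:norank4}, and (for \(W(U)\)) the previously treated \(W(S_1)\). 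This buys two things your plan does not: it sidesteps the hard enumeration entirely (incidentally, \(|W(U)| = 27{,}371{,}520 \approx 2.7\times 10^7\), not \(10^{10}\)), and it produces the explicit identification of each maximal parabolic up to conjugacy listed in Section~\ref{subsec:maxstab}, which is then reused to prove Theorem~\ref{thm:res} on the non-existence of symplectic resolutions.
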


This finishes the proof of Theorem~\ref{thm:dim1}.
Finally, we note how Theorem~\ref{thm:dim1} implies the statement in Remark~\ref{rem:genU}.
The proof is the same easy induction as in \cite[Section 7]{Steinberg64} (see also \cite[Corollary 9.51]{LehrerTaylor09}); we repeat it for the reader's convenience.

\begin{corollary}
  \label{cor:maxsuff}
  Let \(V\) be a finite-dimensional (complex) symplectic vector space and \(G \subset \Sp(V)\) a finite symplectic reflection group.
  Let \(U\) be a subset of \(V\).
  Then the subgroup of \(G\) that fixes \(U\) pointwise is also a symplectic reflection group.
\end{corollary}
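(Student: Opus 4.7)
The plan is a straightforward induction on the dimension of the linear span $\langle U \rangle$, using Theorem~\ref{thm:dim1} as the engine that drives each step. Since $G$ acts linearly on $V$, the pointwise stabilizer of $U$ coincides with the pointwise stabilizer of $\langle U \rangle$, so I will first replace $U$ by its linear span and assume throughout that $U$ is a subspace of $V$. The base case is then trivial: if $\dim U = 0$, then $\Stab_G(U) = G$ is a symplectic reflection group by hypothesis.

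For the inductive step, suppose the statement is known for every symplectic reflection group in $\Sp(V)$ and every subset whose span has dimension strictly less than $d$, and let $U$ be a subspace of dimension $d \geq 1$. Choose any nonzero $v \in U$ and a linear complement $U' \subset U$ to $\langle v \rangle$, so that $\dim U' = d-1$. Set $H := \Stab_G(v)$. By Theorem~\ref{thm:dim1}, $H$ is itself a symplectic reflection group in $\Sp(V)$. Any element of $G$ that fixes $U$ pointwise fixes $v$ in particular, so $\Stab_G(U) = \Stab_H(U')$. The inductive hypothesis applied to the symplectic reflection group $H$ and the subspace $U'$ shows that $\Stab_H(U')$ is generated by symplectic reflections in $H$; these are rank-two unipotent elements in $\Sp(V)$ and therefore symplectic reflections in $G$. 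Hence $\Stab_G(U)$ is a symplectic reflection group.

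There is essentially no obstacle: the whole argument is formal, with Theorem~\ref{thm:dim1} doing all the actual work. The only subtlety worth noting is that the induction is applied not to the fixed original group $G$ but to a chain of symplectic reflection groups $G \supset H_1 \supset H_2 \supset \cdots$ obtained by successively stabilizing individual vectors; it is precisely Theorem~\ref{thm:dim1} that guarantees each link in this chain is again a symplectic reflection group in $\Sp(V)$, so that the induction may be continued. This is the same reduction that appears in \cite[Section 7]{Steinberg64} for the complex reflection group case.
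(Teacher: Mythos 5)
Your proof is correct and takes essentially the same route as the paper: replace \(U\) by its linear span, then induct, peeling off one vector at a time and using Theorem~\ref{thm:dim1} to guarantee that each successive stabilizer in the chain is again a symplectic reflection group in \(\Sp(V)\). One terminological slip worth fixing: symplectic reflections are \emph{not} unipotent (a finite-order element of \(\Sp(V)\) is semisimple, so a nontrivial unipotent element cannot occur); what your argument actually needs, and what holds, is simply that the defining condition \(\rk(g-1)=2\) is intrinsic to the action on \(V\), so a symplectic reflection of \(H\) is automatically a symplectic reflection of \(G\).
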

\begin{proof}
  As the action of \(G\) is linear, we may replace \(U\) be the linear span \(\langle U\rangle\) and assume in the following that \(U\) is a subspace of \(V\).

  Let \(u_1,\dots, u_k\) be a basis of \(U\).
  Recalling that \(\Stab_G(U)\) fixes \(U\) pointwise in this discussion, we have \[\Stab_G(U) = \Stab_G(u_1)\cap\Stab_G(\langle u_2,\dots, u_k\rangle)\;.\]
  Now \[\Stab_G(u_1)\cap \Stab_G(\langle u_2,\dots, u_k\rangle) = \Stab_{\Stab_G(u_1)}(\langle u_2,\dots, u_k\rangle)\] and \(\Stab_G(u_1)\) is a symplectic reflection group by Theorem~\ref{thm:dim1}.
  Hence the claim follows by induction on \(k\).
\end{proof}

\section{Applications}

\subsection{Minimal and maximal parabolic subgroups}

We note the following results on the rank of minimal and maximal parabolic subgroups where minimality and maximality are to be understood with respect to inclusion.

\begin{corollary}
  Let \(G\leq \Sp(V)\) be a finite symplectic reflection group and let \(\{1\}\neq H \leq G\) be a minimal parabolic subgroup.
  Then we have \(\dim V^H = \dim V - 2\), that is, \(H\) is of rank 2.
\end{corollary}
\begin{proof}
  By Corollary~\ref{cor:maxsuff}, the parabolic subgroup \(H\) must contain a symplectic reflection \(s\).
  Set \(K := \Stab_G(V^s)\).
  Then \[\Stab_G(V^H + V^K) = \Stab_H(V^K) = H\cap K\;,\] so \(H = K\) by minimality of \(H\).
  Hence \(\dim V^H = \dim V^s = \dim V - 2\).
\end{proof}

The analogous result for maximal parabolic subgroups is easier and does not require Theorem~\ref{thm:dim1}.
\begin{lemma}
  Let \(G\leq \Sp(V)\) be a finite symplectic reflection group with \(V^G = \{0\}\) and let \(H\leq G\) be a maximal parabolic subgroup.
  Then \(\dim V^H = 2\), that is, \(H\) is of rank \(\dim V - 2\).
\end{lemma}
\begin{proof}
  Let \(S\subset G\) be the set of symplectic reflections.
  Since \(G = \langle S \rangle\), there must exist some \(s \in S\) that is not in \(H\).
  Then \(\dim V^s = \dim V - 2\) and we know
  \[\dim V^s + \dim V^H - \dim V^s \cap V^H = \dim V\] as \(V^s + V^H = V\).
  So if \(\dim V^H > 2\) then \(V^s \cap V^H \neq \{0\}\).
  If this is the case, then let \(K := \Stab_G(V^s\cap V^H)\).
  Since \(V^K \neq \{0\}\), we have \(K \neq G\).
  But \(\langle s,H\rangle \leq K\) so \(H\) is a proper subgroup of \(K\).
  This is a contradiction.
  Thus, \(\dim V^H = 2\).
\end{proof}

\subsection{Codimension of symplectic quotient singularities}
As another application, we have the following result on the singular locus of the symplectic quotient \(V/G\).

\begin{corollary}[{= Theorem~\ref{thm:purecodim2}}]\label{cor:purecodim2}
  If \(G\) is a symplectic reflection group then the singular locus of \(V/G\) is of pure codimension two.
\end{corollary}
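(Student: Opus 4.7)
The plan is to identify the singular locus of $V/G$ with the image of the union of the codimension-two fixed subspaces $V^s$ attached to the symplectic reflections $s\in G$, after which the claim is immediate.

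First I would invoke the well-known consequence of the slice theorem together with the Chevalley--Shephard--Todd theorem: the quotient $V/G$ is smooth at the image $\bar v$ of a point $v\in V$ if and only if the parabolic $G_v$ acts on $V$ as a complex reflection group. Any element $g$ of the finite group $G\subset\Sp(V)$ is semisimple, and its eigenspace decomposition $V = V^g\oplus\bigoplus_{\lambda\neq 1}V_\lambda$ is $\omega$-orthogonal with $V^g$ itself symplectic (as in Lemma~\ref{lem:fixsymp}); in particular $\dim V^g$ is even, so $\rk(g-1)\neq 1$. Hence $G_v\subset\Sp(V)$ contains no complex reflection, and $G_v$ acts as a complex reflection group on $V$ if and only if $G_v=\{1\}$. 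Denoting by $\pi\colon V\to V/G$ the quotient map, the singular locus of $V/G$ is therefore exactly $\pi(\{v\in V : G_v\neq\{1\}\})$.

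Next I would apply Theorem~\ref{thm:main}: every parabolic $G_v$ is generated by the symplectic reflections of $G$ that it contains. If $G_v\neq\{1\}$ then $G_v$ must contain at least one of its reflection generators $s$, and $s\in G_v$ means $v\in V^s$. The opposite inclusion is trivial, so
\[
  \{v\in V : G_v\neq\{1\}\} \;=\; \bigcup_{s} V^s,
\]
where $s$ ranges over the (finite) set of symplectic reflections in $G$.

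To finish, each $V^s$ is a linear subspace of $V$ of codimension $\rk(s-1)=2$, hence irreducible; because $\pi$ is finite, $\pi(V^s)$ is an irreducible closed subvariety of $V/G$ of codimension two, and the singular locus is the finite union $\bigcup_s\pi(V^s)$. Every irreducible component of the singular locus therefore equals $\pi(V^s)$ for some symplectic reflection $s$, and so has codimension exactly two. No real obstacle is anticipated; the essential point is that Theorem~\ref{thm:main} rules out a priori contributions coming from fixed sets $V^g$ of non-reflection elements $g\in G$, which could otherwise yield irreducible components of the singular locus of codimension strictly greater than two.
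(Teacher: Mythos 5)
Your proposal is correct, but it takes a genuinely different route from the paper's. The paper argues locally at a generic point \(p\) of a fixed irreducible component \(Z\) of the singular locus: it lifts \(p\) to \(q \in V\), applies Luna's slice theorem to \(H = \Stab_G(q)\), decomposes \(V = W \oplus V^H\), observes that genericity of \(p\) forces \(0\) to be an isolated singularity of \(W/H\), and then invokes Theorem~\ref{thm:dim1} to produce a codimension-two singular stratum in \(W/H\) (along points stabilized by a symplectic reflection of \(H\)), which is only compatible with an isolated singularity if \(\dim W = 2\); hence \(Z\) has codimension two. You instead argue globally: the slice theorem plus Chevalley--Shephard--Todd, combined with your parity observation that finite-order symplectic transformations have even-dimensional fixed spaces (so \(G\) contains no pseudo-reflections --- note Lemma~\ref{lem:fixsymp} is stated for stabilizers, but its averaging argument applies verbatim to \(\langle g\rangle\)), identifies the singular locus with \(\pi(\{v : G_v \neq 1\})\), and Theorem~\ref{thm:main} then shows this non-free locus is exactly \(\bigcup_s V^s\) over the symplectic reflections \(s \in G\). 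Both proofs extract the same essential consequence of the main theorem --- a nontrivial parabolic must contain a symplectic reflection --- and indeed the paper's parenthetical ``for instance, along the points stabilized by a symplectic reflection'' implicitly uses the same no-pseudo-reflections fact that you make explicit. Your route buys more: an explicit description of the singular locus as the finite union of the irreducible, codimension-two, closed images \(\pi(V^s)\), from which purity is immediate; the paper's route is a slightly shorter contradiction argument at a generic point that avoids having to discuss the branch locus globally.
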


\begin{proof}
 Let $Z \subset V/G$ be an irreducible component of the singular locus.
  Choose $p \in Z$ generic and $q \in V$ a point mapping to $p$ under the quotient map $V \to V/G$.
  Let $H$ be the stabilizer of $q$ in $G$.
  Then Luna's slice theorem \cite{Luna73} says that the map $V/H \to V/G$ induced by the map $V \to V$, $v \mapsto v + q$, is \'etale at $0 \in V/H$.
  In particular, the codimension at $p$ of the singular locus in $V/G$ equals the codimension at $0$ of the singular locus in $V/H$.

 Decompose $V = W \oplus V^H$ as a $H$-module. The fact that $p$ is generic in $Z$ means that $0$ is an isolated singularity in $W/H$. But Theorem~\ref{thm:dim1} says that $H$ acts on $W$ as a symplectic reflection group. This implies that the singular locus of $W/H$ has at least one irreducible component of codimension two (for instance, along the points stabilized by a symplectic reflection). We deduce that \(\dim W = 2\) and the irreducible component $Z$ of the singular locus has codimension 2 in $V/G$.
\end{proof}

\section{Complete intersections}

In this section we consider whether $V/G$ is a complete intersection.

\begin{proposition}
    \label{lem:ci}
  Let \(G\) be a symplectically irreducible finite subgroup of $\Sp(V)$ with $\dim V > 4$.  Then \(G\) is not a CI-group.
\end{proposition}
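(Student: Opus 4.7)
The plan is to combine Kac and Watanabe's necessary condition for CI-groups with Cohen's classification of symplectic reflection groups and dispatch the remaining cases.

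First, I would observe that $\Sp(V) \subset \SL(V)$ contains no non-trivial pseudo-reflections: any finite-order $g \in \Sp(V)$ with $\rk(g-1) = 1$ would be diagonalisable with a single non-trivial eigenvalue $\zeta \neq 1$, so $\det(g) = \zeta \neq 1$, contradicting $g \in \SL(V)$. Therefore, by \cite[Theorem~C]{KacWatanabe82}, if $V/G$ is a complete intersection then $G$ is generated by elements $g$ with $\rk(g-1) \leq 2$, which in $\Sp(V)$ means $G$ is generated by symplectic reflections. Hence $G$ is itself a symplectic reflection group, and Theorem~\ref{thm:main} applies.

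Now I would use Cohen's classification of symplectically irreducible symplectic reflection groups, just as in the proof of Theorem~\ref{thm:dim1}. Under the assumption $\dim V > 4$, three families remain: (i) the complex reducible groups coming from a complex reflection group $W$ acting diagonally on $V \cong \mathfrak{h} \oplus \mathfrak{h}^*$ with $\dim \mathfrak{h} \geq 3$; (ii) the wreath-type groups $G_n(K,H)$ with $n \geq 3$; and (iii) the seven complex irreducible, symplectically primitive exceptional groups attached to Cohen's root systems $Q$ through $U$. For (iii), I would verify non-CI by a direct computer check that fits into the framework of Lemma~\ref{lem:prim}: compute the Hilbert series via Molien's formula for each of the seven groups and check that it cannot factor as $\prod_j (1-t^{b_j})/\prod_i (1-t^{a_i})$ with the numerator and denominator factor counts demanded by a CI presentation. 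For (i) and (ii), I would appeal to the Gordeev--Nakajima classification of finite CI-subgroups of $\GL(V)$ \cite{Gordeev86,NakajimaWatanabe84,Nakajima84,Nakajima85} and confirm that neither family contributes a CI-group in the range $\dim V > 4$.

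The main obstacle I anticipate is making the exclusion in case (ii) rest on something more conceptual than a table look-up. A direct approach would exploit Proposition~\ref{prop:imprim}'s explicit stabilizer description to count the minimal invariants of $G_n(K,H)$ on $V$: these include polarisations of the $K$-invariants on the $n$ copies of $\C^2$, whose $S_n$-symmetrisations grow at least quadratically in $n$, which for $n \geq 3$ exceeds what a CI presentation with $2n$ primary parameters can accommodate. A cleanly stated numerical bound here would give an intrinsic proof in case (ii); otherwise one falls back on Gordeev--Nakajima.
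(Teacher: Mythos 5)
Your overall skeleton matches the paper's proof: reduce via Kac--Watanabe to the statement that a CI-group in $\Sp(V)$ must be a symplectic reflection group (your observation that $\Sp(V)$ contains no pseudo-reflections is correct and is exactly the point being used), then run through Cohen's trichotomy. For cases (i) and (ii) your ``appeal to Gordeev--Nakajima'' is what the paper does, though it does so with specific citations you would need to supply: in the complex reducible case, \cite[Theorem 3]{Gordeev86} forces $[G,G]=\{1\}$, so $W$ must be $G(m,p,1)$ of rank $1$ and hence $\dim V = 2$; in the symplectically imprimitive case, \cite[{}5.2]{Gordeev86} forces the blocks of any system of imprimitivity of a CI-group to be $1$-dimensional, whereas here $\dim V_i = 2$. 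Your hoped-for ``intrinsic'' argument in case (ii) is, as stated, wrong: a complete intersection presentation does not cap the number of algebra generators at $2n$ (only the difference between generators and relations equals $\dim V$), so quadratic growth in the number of minimal invariants is not by itself an obstruction. Since you flag this as speculative and fall back on Gordeev, it does not break the proof, but the fallback is the proof.

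The genuine gap is in case (iii). The Molien-series test you propose is only a necessary condition, and as a \emph{disproof} of CI it is not sound as described: a Hilbert series does not determine the embedding dimension, and any factorization $\prod_j(1-t^{b_j})/\prod_i(1-t^{a_i})$ can be modified by inserting common factors $(1-t^c)$, so ruling out \emph{all} presentations with the required count constraint is an infinite (at best delicate cyclotomic) check for which you give no termination argument. Worse, since $G \subset \SL(V)$ these invariant rings are Gorenstein, so their Hilbert series are palindromic and are exactly the kind liable to admit CI-shaped factorizations even when the ring is not a complete intersection -- in which event your test is simply inconclusive. The paper avoids this entirely: by \cite{Cohen80}, a symplectically primitive group of rank at least six is complex primitive, and complex primitive CI-groups are excluded by \cite[Theorem 5]{Gordeev86}. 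Alternatively (and this is where computation enters, but of a different kind), the parabolic subgroups already computed for Lemma~\ref{lem:prim} in Section~\ref{sec:data} show that each of the seven exceptional groups contains a parabolic of type $G(m,p,n)$ with $n>1$, which is not a CI-group; this contradicts \cite[Theorem C]{KacWatanabe82}, which passes the CI property to isotropy subgroups. If you want a computational route for case (iii), you should replace the Hilbert-series factorization test by this parabolic argument, which reuses the stabilizer computations needed for Theorem~\ref{thm:dim1} rather than requiring any invariant-theoretic computation for groups of order up to $2^{11}\cdot3^5\cdot5\cdot11$ in dimension $10$.
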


\begin{proof}
    We begin by noting that \cite[Theorem~A]{KacWatanabe82} says that if $G$ is a CI-group then $G$ must be a symplectic reflection group. Assuming this, we can make use of the classifications in \cite{Cohen80} and \cite{Gordeev86}.

  First, let \(G\) be complex reducible, so the action of \(G\) on the symplectic space \(V\) is induced from a complex reflection group \(W\) acting on \(\mathfrak h\) with \(V = \mathfrak h \oplus \mathfrak h^\ast\).
  If \(G\) is a CI-group, we must have \([G,G] = \{1\}\) by \cite[Theorem 3]{Gordeev86}. In other words, \(G\) is abelian. But this can only happen if \(W = G(m, p, 1)\) in the classification \cite{ShephardTodd54}.
  In particular, the group \(W\) is rank 1 and hence \(G\) is rank 2.

Next, assume that \(G\) is complex irreducible and symplectically imprimitive, with system of imprimitivity \(V = V_1\oplus\cdots \oplus V_k\). Recall that this implies \(\dim V_i = 2\) for all $i$, as shown in the proof of \cite[Theorem 2.9]{Cohen80}. But then \(G\) cannot be a CI-group by \cite[{}5.2]{Gordeev86} which says that if $G$ is a CI-group then $\dim V_i = 1$ for all $i$.

  If \(G\) is symplectically primitive and has rank at least six then it must be complex primitive by \cite{Cohen80}. But then it cannot be a CI-group by \cite[Theorem 5]{Gordeev86}. We note that this also follows from our computational results in Section~\ref{sec:data}, together with the arguments in the first paragraph, since all of these groups contain a stabilizer of type \(G(m, p, n)\) with \(n > 1\) which is not a CI-group; this contradicts \cite[Theorem C]{KacWatanabe82}.
\end{proof}

Since the finite subgroups of \(\Sp_2(\C) = \SL_2(\C)\) are well-known to be CI-groups, it remains to understand which symplectic reflection groups of rank 4 are CI-groups.
In theory, one could use the classification of Gordeev and Nakajima \cite{Gordeev86,NakajimaWatanabe84,Nakajima84,Nakajima85} for this, but this appears to be very difficult to do in practice.

Modulo the groups of rank four, Proposition~\ref{lem:ci} answers the first half of \cite[Problem 1]{Fu06} in the case of symplectic quotient singularities.

\section{Symplectic resolutions}
\label{sec:res}

In this section, we return to the question of whether the symplectic quotient \(V/G\), for a symplectically irreducible symplectic reflection group \(G\), admits a symplectic (equivalently crepant) projective resolution. In recent years, much progress has been made in classifying all groups for which this question has a positive answer. The combined work of \cite{EtingofGinzburg02}, \cite{GinzburgKaledin04}, \cite{Gordon03}, \cite{Bellamy09}, \cite{BellamySchedler13}, \cite{BellamySchedler16}, \cite{Yamagishi18}, and \cite{BellamySchmittThiel21} leaves only 48 open cases.

We explain how the computational results in Section~\ref{sec:data} imply that the symplectic quotient associated to the groups \(W(R)\), \(W(S_1)\), and \(W(U)\) (as given in \cite[Table III]{Cohen80}) do not admit projective symplectic resolutions.

\begin{theorem}[{= Theorem~\ref{thm:resintro}}]
  \label{thm:res}
  The symplectic quotients \(\C^6/W(R)\), \(\C^8/W(S_1)\), and \(\C^{10}/W(U)\) do not admit (projective) symplectic resolutions.
\end{theorem}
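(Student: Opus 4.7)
The plan is to apply the slice principle for symplectic resolutions: if $V/G$ admits a projective symplectic resolution and $H = \Stab_G(v)$ is a parabolic with associated symplectic complement $W = (V^H)^\perp$ (cf.~Lemma~\ref{lem:fixsymp}), then $W/H$ also admits a projective symplectic resolution. This follows because Luna's étale slice theorem provides an étale map $V^H \times (W/H) \to V/G$ identifying neighborhoods of $(v,0)$ and of the image of $v$; pulling back the resolution and restricting to a fibre over $V^H$ yields the desired resolution of $W/H$. Contrapositively, to prove the theorem it suffices to exhibit, for each of $W(R)$, $W(S_1)$, $W(U)$, a single parabolic whose associated quotient is already known to have no symplectic resolution.

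Proceeding, I would first invoke the explicit enumeration of parabolic subgroups of $W(R)$, $W(S_1)$, and $W(U)$ produced in Section~\ref{sec:data}: by Theorem~\ref{thm:dim1} each such parabolic is a symplectic reflection group, and the computation identifies it, up to conjugation, with a group on Cohen's list. I would then scan this enumeration for parabolics $H \subset \Sp(W)$ with $\dim W = 4$ whose quotient $W/H$ is already known \emph{not} to admit a projective symplectic resolution, by the combined work of \cite{EtingofGinzburg02,GinzburgKaledin04,Gordon03,Bellamy09,BellamySchedler13,BellamySchedler16,Yamagishi18,BellamySchmittThiel21}. Rank-two parabolics are useless here, since every finite subgroup of $\Sp_2(\C) = \SL_2(\C)$ yields a Kleinian, hence resolvable, quotient. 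Once such a parabolic is located in each of the three ambient groups, the slice principle concludes the argument at once.

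The main obstacle is not conceptual but rather the identification step: the computer produces each parabolic as an abstract group of matrices acting on $W$, and to cite the correct negative result one must match it to its conjugacy class in Cohen's classification — in particular, distinguish between the various rank-$4$ exceptional symplectic reflection groups, since only specific entries on that list have been ruled out in prior work. This is essentially a bookkeeping task, but one that must be executed carefully. Since the authors already computed the stabilizers explicitly for the proof of Lemma~\ref{lem:prim}, this matching should reduce to inspecting a short table, and a single ``offending'' parabolic in each of the three cases will deliver the three non-existence statements simultaneously.
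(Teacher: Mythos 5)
Your skeleton coincides with the paper's: a slice principle reducing non-existence for \(V/G\) to non-existence for parabolic quotients, combined with the explicit parabolic data of Section~\ref{sec:data} and previously known negative results. One minor caveat first: your one-line derivation of the slice principle from Luna's theorem only yields a resolution of an open neighbourhood of \(0\) in \(W/H\) (the fibre of the étale chart), not of the cone \(W/H\) itself; extending over the whole cone uses the conical structure and is precisely the content of \cite[Theorem 1.6]{Kaledin03}, which is what the paper cites and what you should cite rather than treat as immediate.

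The genuine gap is your restriction of the scan to parabolics \(H \subset \Sp(W)\) with \(\dim W = 4\): for two of the three groups this scan comes up empty, so the argument as specified stalls. It works for \(W(R)\), where the maximal parabolic \(G(5,5,2)\) acts on \(\mathfrak h \oplus \mathfrak h^* \cong \C^4\) and is ruled out by \cite{Bellamy09}. But for \(W(S_1)\) the only parabolic whose quotient is known to admit no resolution is the doubling of \(G(3,3,3)\), which lives in \(\Sp_6(\C)\) (\(\dim W = 6\)); the rank-four parabolics of \(W(S_1)\) all arise from reflection groups of type \(S_n\), \(G(m,1,n)\), or small dihedral groups, whose doubled quotients do admit resolutions. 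Likewise for \(W(U)\) the offending maximal parabolics \(\mathsf C_2 \times G(3,3,3)\), \(G(3,3,4)\), and \(W(S_1)\) all have rank eight. Relatedly, your worry in the last paragraph about distinguishing rank-four \emph{exceptional} symplectic reflection groups is misplaced: the parabolics that do the work are complex reducible ones, identified with groups \(G(m,p,n)\) acting on \(\mathfrak h \oplus \mathfrak h^*\), and for this class \cite{Bellamy09} gives a complete answer in every rank (a resolution exists if and only if \(W\) is \(S_n\), \(G(m,1,n)\), or \(G_4\)). Dropping the dimension restriction fixes everything: \(G(5,5,2)\) handles \(W(R)\), \(G(3,3,3)\) handles \(W(S_1)\), and for \(W(U)\) one can either use \(\mathsf C_2 \times G(3,3,3)\) (or \(G(3,3,4)\)) directly, or, as the paper does, observe that \(W(S_1)\) is itself a parabolic of \(W(U)\), so the freshly established non-existence for \(W(S_1)\) propagates along the chain — a step your scan for \emph{previously} known negative cases would miss.
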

\begin{proof}
  If there exists a resolution in any of these cases, then the symplectic quotient associated to every parabolic subgroup also admits a symplectic resolution by \cite[Theorem 1.6]{Kaledin03}.

  From the results described in Section~\ref{sec:data}, we see that \(W(R)\) (respectively \(W(S_1)\)) contains a parabolic subgroup conjugate to the complex reflection group \(G(5, 5, 2)\) (respectively conjugate to \(G(3, 3, 3)\)). In both cases, the quotient by this parabolic does not admit a symplectic resolution by \cite{Bellamy09}. Hence neither do the quotients by \(W(R)\) and \(W(S_1)\).

  Finally, \(W(S_1)\) is the stabilizer of a root of \(W(U)\) by \cite[Table III]{Cohen80} (see also Section~\ref{sec:data}). Therefore, this quotient cannot admit a symplectic resolution either.
\end{proof}

\begin{remark}
  With Theorem~\ref{thm:res} in hand, there are now only 45 groups for which the question of existence of a symplectic resolution is not yet decided.
  These are the groups given in \cite[Table 6]{BellamySchmittThiel21}, together with the groups \(W(O_i)\) and \(W(P_i)\) for \(i = 1,2, 3\) in \cite[Table III]{Cohen80}.
  All of them are symplectically primitive, of rank 4. This last fact means that the strategy used in this section cannot be applied, as noted in the proof of Lemma~\ref{rem:norank4}.
\end{remark}

\section{Explicit results on symplectically primitive groups}
\label{sec:data}

We state the explicit results required for Lemma~\ref{lem:prim} and Theorem~\ref{thm:res}, by listing (up to conjugacy) all the maximal parabolic subgroups one finds for the groups in question in Section~\ref{subsec:maxstab}.
We now give an outline of how these groups were computed.

Given a symplectically primitive symplectic reflection group, one computes the conjugacy classes of all subgroups using the computer algebra systems GAP \cite{Gap} or Magma \cite{Magma} with the command \texttt{ConjugacyClassesSubgroups} or \texttt{Subgroups} respectively.
One then checks which of these subgroups are parabolic by determining their fixed space using basic linear algebra and then the stabilizer of the fixed space using the command \texttt{Stabilizer} in either GAP or Magma; if this stabilizer coincides with the group, we have found a parabolic subgroup.
Let \(H\) be one of the parabolic subgroups.
One now computes all symplectic reflections contained in \(H\) by computing the conjugacy classes of \(H\) (using \texttt{ConjugacyClasses} in either GAP or Magma) and checking whether the given representative is a symplectic reflection.
Finally, one checks whether \(H\) is generated by the conjugacy classes of symplectic reflections determined in this way.
As in Corollary~\ref{cor:maxsuff}, it suffices to consider the maximal parabolic subgroups: if \(v, w\in V\), with \(G_w \leq G_v\), then it suffices to check, by induction on rank, that \(G_v\) is generated by symplectic reflections.

Identifying a parabolic subgroup with a group in Cohen's classification is an easy but tedious task using the classification and linear algebra.
As the matrices generating the parabolic subgroups tend to become quite large, we do not do this in detail here; Section~\ref{subsec:ex} serves as an example for these computations.

Magma and GAP files with the necessary code to generate the symplectically primitive symplectic reflection groups can be found on the second author's github page.\footnote{\url{https://github.com/joschmitt/Parabolics}}

\subsection{An example: the group \(W(S_1)\)}
\label{subsec:ex}

As an illustration, we show that there is a parabolic subgroup $H$ of \(W(S_1)\) which is isomorphic (as a symplectic reflection group) to the complex reducible symplectic group coming from the complex reflection group \(G(3, 3, 3)\).

The necessary computer calculations were carried out and cross-checked using the software package Hecke \cite{Hecke} and the computer algebra systems GAP \cite{Gap} and Magma \cite{Magma}.

\subsubsection{The group}

The group $W(S_1)$ is a subgroup of \(\Sp_8(\C)\) of order \(2^8\cdot3^3 = \num{6 912}\).
Like all complex primitive groups, it is given by a root system \cite[Table II]{Cohen80}.
Cohen lists 36 root lines for the group. However, four are enough to generate a group of the correct order. A choice of root lines are
\begin{align*}
  &( 1, i, 0, 0, 0, 0, 1, -i), &&( 1 - i, 1 - i, 0, 0, 0, 0, 0, 0),\\
  &( 1 - i, 0, 1 - i, 0, 0, 0, 0, 0), &&( 1 - i, 0, 0, 1 - i, 0, 0, 0, 0).
\end{align*}
Note that these are the ``complexified'' versions of the vectors over the quaternions given in \cite{Cohen80}.
Thus, the group \(W(S_1)\leq \Sp_8(\C)\) is generated by the symplectic reflection matrices
\begin{align*}
  M_1 := \frac{1}{2}&\left(\begin{smallmatrix}
     1 & i &    &    &   &    & -1 & -i \\
    -i & 1 &    &    &   &    & -i &  1 \\
       &   &  1 &  i & 1 &  i &    &    \\
       &   & -i &  1 & i & -1 &    &    \\
       &   &  1 & -i & 1 & -i &    &    \\
       &   & -i & -1 & i &  1 &    &    \\
    -1 & i &    &    &   &    &  1 & -i \\
     i & 1 &    &    &   &    &  i &  1
  \end{smallmatrix}\right),
  &&M_2 := \left(\begin{smallmatrix}
       & -1 &    &    &    &    &    &    \\
    -1 &    &    &    &    &    &    &    \\
       &    &  1 &    &    &    &    &    \\
       &    &    &  1 &    &    &    &    \\
       &    &    &    &    & -1 &    &    \\
       &    &    &    & -1 &    &    &    \\
       &    &    &    &    &    &  1 &    \\
       &    &    &    &    &    &    &  1
  \end{smallmatrix}\right),
\end{align*}
\begin{align*}
  M_3 := &\left(\begin{smallmatrix}
       &    & -1 &    &    &    &    &    \\
       &  1 &    &    &    &    &    &    \\
    -1 &    &    &    &    &    &    &    \\
       &    &    &  1 &    &    &    &    \\
       &    &    &    &    &    & -1 &    \\
       &    &    &    &    &  1 &    &    \\
       &    &    &    & -1 &    &    &    \\
       &    &    &    &    &    &    &  1
  \end{smallmatrix}\right),
  &&M_4 := \left(\begin{smallmatrix}
       &    &    & -1 &    &    &    &    \\
       &  1 &    &    &    &    &    &    \\
       &    &  1 &    &    &    &    &    \\
    -1 &    &    &    &    &    &    &    \\
       &    &    &    &    &    &    & -1 \\
       &    &    &    &    &  1 &    &    \\
       &    &    &    &    &    &  1 &    \\
       &    &    &    & -1 &    &    &
  \end{smallmatrix}\right),
\end{align*}
through these root lines.

\subsubsection{The parabolic subgroup}

Let \(v := (0, 0, 0, 1, -\alpha, \alpha, -\alpha, \alpha + 1)^\top\in \C^8\), where \(\alpha := \frac{1}{2}(i - 1)\). Let \(H\leq W(S_1)\) be the stabilizer of \(v\). Using the command \texttt{Stabilizer} in either GAP or Magma one can compute this group: \[H = \langle M_2, M_3M_1M_3, M_4M_1M_4\rangle\;.\]
The space \(V^H \subset \C^8\) of vectors fixed by \(H\) is generated by \(v\) and \((1, -1, 1, 0, \alpha + 1, -\alpha - 1, \alpha + 1, 3\alpha)^\top\).
Its \(H\)-invariant complement \(W\) has a basis given by the columns \(w_1,\dots, w_6\in \C^8\) of the matrix
\[
  \left(\begin{smallmatrix}
    \zeta^2 + \zeta + 1 & \zeta^3 + \zeta^2 - \zeta - 2 & \zeta & -\zeta^3 + \zeta^2 + \zeta - 2 & \zeta^2 - \zeta + 1 & -\zeta^3 + \zeta \\
    -\zeta^3 - \zeta^2 + \zeta + 2 & -\zeta^2 - \zeta - 1 & -\zeta & -\zeta^2 + \zeta - 1 & \zeta^3 - \zeta^2 - \zeta + 2 & \zeta^3 - \zeta \\
    -\zeta^3 - 2\zeta^2 + 1 & -\zeta^3 - 2\zeta^2 + 1 & \zeta & \zeta^3 - 2\zeta^2 + 1 & \zeta^3 - 2\zeta^2 + 1 & -\zeta^3 + \zeta \\
    0 & 0 & -2\zeta^3 + \zeta & 0 & 0 & \zeta^3 + \zeta \\
    -\zeta^2 - \zeta + 1 & -\zeta^3 + \zeta^2 + \zeta & \zeta^3 + \zeta^2 & \zeta^3 - \zeta^2 - \zeta & \zeta^2 + \zeta - 1 & -\zeta^3 + \zeta^2 - 1 \\
    \zeta^3 - \zeta^2 - \zeta & \zeta^2 + \zeta - 1 & -\zeta^3 - \zeta^2 & -\zeta^2 - \zeta + 1 & -\zeta^3 + \zeta^2 + \zeta & \zeta^3 - \zeta^2 + 1 \\
    \zeta^3 - 1 & \zeta^3 - 1 & \zeta^3 + \zeta^2 & -\zeta^3 + 1 & -\zeta^3 + 1 & -\zeta^3 + \zeta^2 - 1 \\
    0 & 0 & -\zeta^3 - \zeta^2 + 2\zeta + 2 & 0 & 0 & -\zeta^3 - \zeta^2 + 2\zeta - 1 \\
  \end{smallmatrix}\right)
\]
where \(\zeta\in \C\) is a primitive 12-th root of unity such that \(\zeta^3 = i\).

By changing the basis from \(\C^8\) to \(W \oplus V^H\) and restricting to \(W\) we may identify \(H\) with the subgroup \(H_W\) of \(\Sp(W)\) generated by the matrices
\[\left(\begin{smallmatrix}
    & 1 &   &   &   &   \\
  1 &   &   &   &   &   \\
    &   & 1 &   &   &   \\
    &   &   &   & 1 &   \\
    &   &   & 1 &   &   \\
    &   &   &   &   & 1
  \end{smallmatrix}\right),
  \left(\begin{smallmatrix}
    &   & 1 &   &   &   \\
    & 1 &   &   &   &   \\
  1 &   &   &   &   &   \\
    &   &   &   &   & 1 \\
    &   &   &   & 1 &   \\
    &   &   & 1 &   &
  \end{smallmatrix}\right),
  \left(\begin{smallmatrix}
              &   & \zeta^2 - 1 &             &   &          \\
              & 1 &             &             &   &          \\
    -\zeta^2  &   &             &             &   &          \\
              &   &             &             &   & -\zeta^2 \\
              &   &             &             & 1 &          \\
              &   &             & \zeta^2 - 1 &   &
  \end{smallmatrix}\right).
  \]
The basis of \(W\) was chosen so that the symplectic form on \(W \) is, up to a constant, given by the matrix
\[
  \begin{pmatrix} & I_3 \\ -I_3 & \end{pmatrix}.
\]

One can see directly that \(H_W\) leaves the subspace \(\langle w_1, w_2, w_3\rangle\) invariant and that this subspace is Lagrangian. Hence \(H_W\) is a complex reducible, but symplectically irreducible, group coming from a complex reflection group in \(\GL(\langle w_1, w_2, w_3\rangle)\).
Since the complex reflection group has rank 3 and order 54 it must be conjugate to \(G(3, 3, 3)\) in the classification \cite{ShephardTodd54}.

\subsection{Maximal parabolic subgroups}
\label{subsec:maxstab}

In this section, we list the maximal parabolic subgroups up to conjugation.

\subsubsection{\(W(Q)\)}

The group \(W(Q)\) is a subgroup of \(\Sp_6(\C)\) of order \(2^6\cdot3^3\cdot7 = \num{12096}\).
It is generated by the symplectic reflections corresponding to the root lines
\begin{align*}
  &(2, 0, 0, 0, 0, 0)\;, && \frac{1}{2}(2i, 2i, -i + 1, 0, 0, i\sqrt{5} - 1)\;, \\
  &\frac{1}{2}(2i, 2, i + 1, 0, 0, i + \sqrt{5})\;, && \frac{1}{2}(2, 2i, i + 1, 0, 0, i + \sqrt{5}).
\end{align*}

The maximal parabolic subgroups are each conjugate to \(H_1 := G(3, 3, 2)\) or \(H_2 := G(4, 2, 2)\).
They stabilize the following vectors:
\[\begin{array}{c | l}
   & v \\
  \hline
  H_1 & (1, 0, 0, \alpha, \beta, 0) \\
  H_2 & (1, 0, 1, \alpha, 2\beta, \alpha)
\end{array}\]
where \(\alpha := \frac{1}{6}(i\sqrt{5} + i - \sqrt{5} + 1)\) and \(\beta := \frac{1}{3}(-i + \sqrt{5})\).

\subsubsection{\(W(R)\)}

The group \(W(R)\) is a subgroup of \(\Sp_6(\C)\) of order \(2^8\cdot3^3\cdot5^2\cdot7 = \num{1209600}\).
It is generated by the symplectic reflections corresponding to the root lines
\begin{align*}
  &(2, 0, 0, 0, 0, 0)\;, && \frac{1}{2}(i + 1, i - 1, 0, -i\sqrt{5} + 1, -i - \sqrt{5}, 0)\;, \\
  &\frac{1}{2}(0, 0, i + 1, -i, 1, i + \sqrt{5})\;, && \frac{1}{2}(0, 2i, i + \sqrt{5}, 2, 0, -i - 1)\;.
\end{align*}

The maximal parabolic subgroups are conjugate to \(H_1 := G(3, 3, 2)\), \(H_2 := G(5, 5, 2)\), or \(H_3 := G(\mathsf D_2, \mathsf C_2, 1)\).
They stabilize the following vectors:
\[\begin{array}{c | l}
   & v \\
  \hline
  H_1 & (0, 1, \sqrt{5} - 1, \sqrt{5} - 1, \frac{1}{2}(i\sqrt{5} - i + \sqrt{5} - 3, 0) \\
  H_2 & (0, 1, \frac{1}{2}(i\sqrt{5} + i - 2), \frac{1}{2}(i\sqrt{5} + i + \sqrt{5} + 1), 1,\frac{1}{2}(-2i + \sqrt{5} + 1)) \\
  H_3 & (0, 1, \frac{1}{2}(\sqrt{5} + 3), \frac{1}{2}(-i\sqrt{5} - i + \sqrt{5} + 1), 1, \frac{1}{2}(-i\sqrt{5} - 3i))
\end{array}\]

\subsubsection{\(W(S_1)\)}

The group \(W(S_1)\) is a subgroup of \(\Sp_8(\C)\) of order \(2^8\cdot3^3 = \num{6912}\).
It is generated by the symplectic reflections corresponding to the root lines
\begin{align*}
  & (1, i, 0, 0, 0, 0, 1, -i)\;, && (-i + 1, -i + 1, 0, 0, 0, 0, 0, 0)\;,\\
  & (-i + 1, 0, -i + 1, 0, 0, 0, 0, 0)\;, && (-i + 1, 0, 0, -i + 1, 0, 0, 0, 0)\;.
\end{align*}

The maximal parabolic subgroups are conjugate to \(H_1 := \mathsf C_2\times \mathsf C_2 \times\mathsf C_2\), \(H_2 := G(2, 2, 3)\), or \(H_3 := G(3, 3, 3)\).
They stabilize the following vectors:
\[\begin{array}{c | l}
   & v \\
  \hline
  H_1 & (1, 0, 0, -1, 0, 0, 0, 0) \\
  H_2 & (0, 1, i, 0, 0, 0, 0, 0) \\
  H_2 & (1, i, i, -1, 0, 0, 0, 0) \\
  H_2 & (0, 0, 1, 0, 0, 0, 0, 0) \\
  H_2 & (0, 1, 0, 0, 0, 0, 1, 0) \\
  H_3 & (0, 0, 0, 1, \frac{1}{2}(1 - i), \frac{1}{2}(i - 1), \frac{1}{2}(1 - i), \frac{1}{2}(i + 1))
\end{array}\]
Note that multiple occurrences of $H_2$ in the above table means that there are distinct maximal parabolic subgroups which are conjugate in \(\GL_8(\C)\), but not in \(W(S_1)\).

\subsubsection{\(W(S_2)\)}

The group \(W(S_2)\) is a subgroup of \(\Sp_8(\C)\) of order \(2^{10}\cdot3^4 = \num{82944}\).
It is generated by the symplectic reflections corresponding to the root lines
\begin{align*}
  & (1, i, 0, 0, 0, 0, 1, -i)\;, && (-i + 1, -i + 1, 0, 0, 0, 0, 0, 0)\;, \\
  & (-i + 1, 0, -i + 1, 0, 0, 0, 0, 0)\;, && (2, 0, 0, 0, 0, 0, 0, 0)\;.
\end{align*}
The maximal parabolic subgroups are conjugate to \(H_1 := \mathsf C_2 \times G(3, 3, 2)\), \(H_2 := G(2, 2, 3)\), \(H_3 := G(2, 1, 3)\), \(H_4 := G(3, 3, 3)\), and \(H_5 := G(4, 4, 3)\).
They stabilize the following vectors:
\[\begin{array}{c | l}
   & v \\
  \hline
  H_1 & (1, -1, -1, 0, 0, 0, 0, 0) \\
  H_2 & (1, -1, 0, 0, 0, 0, i - 1, 0) \\
  H_3 & (0, 1, 0, 0, 0, 0, 0, 0) \\
  H_4 & (1, 0, 1, 0, -1, i - 1, -i, 0) \\
  H_5 & (1, -i, 0, 0, 0, 0, 0, 0)
\end{array}\]

\subsubsection{\(W(S_3)\)}

The group \(W(S_3)\) is a subgroup of \(\Sp_8(\C)\) of order \(2^{13}\cdot3^4\cdot5 = \num{3317760}\).
It is generated by the symplectic reflections corresponding to the root lines
\begin{align*}
  & (1, i, 0, 0, 0, 0, 1, -i)\;, && (-i + 1, -i + 1, 0, 0, 0, 0, 0, 0)\;,\\
  & (-i + 1, 0, -i + 1, 0, 0, 0, 0, 0)\;, && (2, 0, 0, 0, 0, 0, 0, 0)\;,\\
  & (-i + 1, 0, 0, 0, 0, -i + 1, 0, 0)\;.
\end{align*}
The maximal parabolic subgroups are conjugate to \(H_1 := \mathsf C_2 \times G(3, 3, 2)\), \(H_2 := G(2, 2, 3)\), \(H_3 := G(3, 3, 3)\), and \(H_4 := G_3(\mathsf D_2, \mathsf C_2)\).
They stabilize the following vectors:
\[\begin{array}{c | l}
   & v \\
  \hline
  H_1 & (1, -i, 0, 0, 0, 0, 3, i) \\
  H_2 & (0, 0, 2, 0, -1, i, i, 1) \\
  H_3 & (0, 1, 0, -1, -1, i - 1, i, 0) \\
  H_4 & (1, 0, 0, 1, 0, 0, 0, 0)
\end{array}\]

\subsubsection{\(W(T)\)}

The group \(W(T)\) is a subgroup of \(\Sp_8(\C)\) of order \(2^8\cdot3^4\cdot5^3 = \num{2592000}\).
It is generated by the symplectic reflections corresponding to the root lines
\begin{align*}
  & (-\zeta^3 + \zeta^2 + 1, \zeta^3 - \zeta^2, -1, 0, 0, 0, 0, 0)\;, && (1, 0, 0, 0, 0, 0, 0, 0)\;,\\
  & (1, 1, 1, 1, 0, 0, 0, 0)\;, && (1, i, 0, 0, 0, 0, -1, i)\;,
\end{align*}
where \(\zeta\) is a primitive 10-th root of unity.

The maximal parabolic subgroups are conjugate to \(H_1 := \mathsf C_2 \times G(3, 3, 2)\), \(H_2 := \mathsf C_2 \times G(5, 5, 2)\), \(H_3 := G(2, 2, 3)\), \(H_4 := G(3, 3, 3)\), \(H_5 := G_{23}\), and \(H_6 := G(5, 5, 3)\).
They stabilize the following vectors:
\[\begin{array}{c | l}
   & v \\
  \hline
  H_1 & (0, 0, 0, 0, 1, \frac{1}{2}(-\zeta^3 + \zeta^2 + 1), -\zeta^3 + \zeta^2 + \frac{1}{2}, \frac{1}{2}(\zeta^3 - \zeta^2)) \\
  H_2 & (0, 0, 0, 0, 1, -\zeta^3 + \zeta^2 + 1, -\zeta^3 + \zeta^2, 2\zeta^3 - 2\zeta^2 - 2) \\
  H_3 & (1, 0, \zeta^3 + \zeta^2 + 1, -3\zeta^3 + 3\zeta^2 + 4, i\zeta^3 - i\zeta^2 - \zeta^3 + \zeta^2 + 1, \\
      & -2i\zeta^3 - 2i\zeta^2 - 2i - 2\zeta^3 + 2\zeta^2 + 4, i + \zeta^3 - \zeta^2, -i\zeta^3 + i\zeta^2 + i + 1) \\
  H_3 & (0, 0, 0, 0, 1, -\zeta^3 + \zeta^2 + 2, 0, \zeta^3 - \zeta^2 - 3) \\
  H_4 & (1, 0, -\zeta^3 + \zeta^2 + 1, \zeta^3 - \zeta^2, -i, i\zeta^3 - i\zeta^2 - i + \zeta^3 - \zeta^2 - 1, \\
      & i\zeta^3 - i\zeta^2 + 1, \zeta^3 - \zeta^2) \\
  H_5 & (0, 1, \frac{1}{5}(4i\zeta^3- 4i\zeta^2 - 2i + 3\zeta^3 - 3\zeta^2 - 4), (2i\zeta^3 - 2i\zeta^2 - 6i - \zeta^3 + \zeta^2 + 3),\\
      & \frac{1}{5}(4i\zeta^3 - 4i\zeta^2 - 2i - 2\zeta^3 + 2\zeta^2 + 6), (3i\zeta^3 - 3i\zeta^2 - 4i + \zeta^3 - \zeta^2 - 3),\\
      & \frac{1}{5}(i\zeta^3 - i\zeta^2 - 3i + 2\zeta^3 - 2\zeta^2 - 1)) \\
  H_6 & (0, 1, \zeta^3 - \zeta^2, \zeta^3 - \zeta^2 + 1, -2i\zeta^3 + 2i\zeta^2, -i\zeta^3 + i\zeta^2 - \zeta^3 + \zeta^2 + 1,\\
      & -i\zeta^3 + i\zeta^2 + i - 1, i - \zeta^3 + \zeta^2)
\end{array}\]

Note that there are two distinct maximal parabolic subgroups which are conjugate in \(\GL_8(\C)\), but not in \(W(T)\).

\subsubsection{\(W(U)\)}

The group \(W(U)\) is a subgroup of \(\Sp_{10}(\C)\) of order \(2^{11}\cdot3^5\cdot5\cdot11 = \num{27371520}\).
It is generated by the symplectic reflections corresponding to the root lines
\begin{align*}
  &(2, 0, 0, 0, 0, 0, 0, 0, 0, 0)\;,\\
  &(0, 2, i - 1, i - 1, 2, 0, 0, -i + 1, -i + 1, 0)\;,\\
  &(0, 2, i - 1, -i - 1, 2i, 0, 0, i - 1, -i - 1, 0)\;,\\
  &(0, 2, -i - 1, i - 1, 0, 0, 0, i + 1, i - 1, 2)\;,\\
  &(2, i - 1, i - 1, 2, 0, 0, -i + 1, -i + 1, 0, 0)\;.
\end{align*}

The maximal parabolic subgroups are conjugate to \(H_1 := \mathsf C_2 \times G(2, 2, 3)\), \(H_2 := \mathsf C_2 \times G(3, 3, 3)\), \(H_3 := \mathfrak S_5\), \(H_4 := G(3, 3, 4)\), and \(H_5 := W(S_1)\).
They stabilize the following vectors:
\[\begin{array}{c | l}
   & v \\
  \hline
  H_1 & (2, 0, i - 1, 0, -i + 3, 2i + 2, -2, -i - 1, 0, i - 1) \\
  H_2 & (0, 2, i - 1, 0, -i - 1, -6i, 0, i + 1, 0, -i + 1) \\
  H_3 & (1, 0, -2i + 1, i + 1, -i + 1, -i, 2i, i, -2i, 0) \\
  H_4 & (2, 0, 0, -i - 1, -i - 1, 0, -i + 1, -2i, -i - 1, i + 1) \\
  H_5 & (2, 0, i + 1, 0, i - 1, 0, -2i, i - 1, 0, i + 1)
\end{array}\]

\clearpage
\printbibliography

\end{document}